\author{C\'edric Bonnaf\'e\footnote{The author is partly supported by 
the ANR (Project No JC07-192339).}~ and Gregor Kemper}
\title{Some complete intersection symplectic quotients in positive
  characteristic: invariants of a vector and a covector}
\date{February 21, 2011}
\begin{document}

\maketitle

\begin{abstract}
  Given a linear action of a group $G$ on a $K$-vector space $V$, we
  consider the invariant ring $K[V \oplus V^*]^G$, where $V^*$ is the
  dual space. We are particularly interested in the case where $V =
  \gfq^n$ and $G$ is the group $U_n$ of all upper unipotent matrices
  or the group $B_n$ of all upper triangular matrices in
  $\GL_n(\gfq)$.
  
  In fact, we determine $\gfq[V \oplus V^*]^G$ for $G = U_n$ and $G =
  B_n$. The result is a complete intersection for all values of~$n$
  and~$q$. We present explicit lists of generating invariants and
  their relations. This makes an addition to the rather short list of
  ``doubly parametrized'' series of group actions whose invariant
  rings are known to have a uniform description.
\end{abstract}

\section*{Introduction}

Many interesting subgroups of $\GL_n(\gfq)$ come in doubly
parametrized series, where one parameter is linked to~$n$ and the
other to~$q$. Important examples are the finite classical groups, the
groups $B_n$ and $U_n$ of upper triangular matrices and unipotent
upper triangular matrices in $\GL_n(\gfq)$, and the cyclic $p$-groups
acting indecomposably. In the context of invariant theory, not only
the natural actions but also others, including decomposable ones, are
interesting.  For the following series of groups with their natural
actions, the invariant rings have been determined: the general and
special linear groups (this goes back to L. Dickson, see for instance 
\mycite[Chapter~8.1]{lsm} or \mycite{wil}),
the groups $B_n$ and $U_n$ (see \mycite[Section~4.5, Example~2]{NS} or
\mycite[Proposition~5.5.6]{lsm}), the finite symplectic groups (this
goes back to D. Carlisle and P.  Kropholler, see
\mycite[Chapter~8.3]{bens}), and the finite unitary groups
(\mycite{Chi:Yao:2006}). For $\GL_n(\gfq)$, $\SL_n(\gfq)$, $U_n$, and
$B_n$, the invariant rings are isomorphic to polynomials rings, and
their determination is fairly easy. For the finite symplectic and
unitary groups, the invariant rings are complete intersections, and
the same is expected for the finite orthogonal groups
(see~[\citenumber{Chi:Yao:2006}]). To the best of our knowledge, no
results have appeared so far about the invariant rings of a
doubly-parametrized series of groups with a non-trivial decomposable action.

In this paper we study the invariant rings of the type $K[V \oplus
V^*]^G$, where $G$ is a finite group acting on a finite-dimensional
$K$-vector space $V$ and $V^*$ is the dual space. In the language of
classical invariant theory, the elements of $K[V \oplus V^*]^G$ are
called {\em invariants of a vector and a covector}. In the case that
$K$ has characteristic zero and $G$ is generated by reflections, $K[V
\oplus V^*]^G$ has been studied intensively in the last fifteen years,
in relation with the representation theory of Cherednik algebras and
the geometry of Hilbert schemes and Calogero-Moser spaces: see the
pioneering work of \citename{Haiman:94} on the symmetric group case
[\citenumber{Haiman:94}], [\citenumber{Haiman:01}],
[\citenumber{Haiman:02}] and, for instance, \mycite{Etingof:Ginzburg},
\mycite{Ginzburg:Kaledin}, \mycite{Gordon:03}, and
\mycite{Bellamy:09}. The ring $K[V \oplus V^*]^G$ is also important
for the computation of invariants in Weyl algebras (see
\mycite{Kemper:Quiring}). Here we consider the case that $K = \gfq$ is
a finite field and $G$ is one of the groups $B_n$ or $U_n$, and
calculate the invariant ring $\gfq[V \oplus V^*]^G$.
The result is surprisingly simple. In fact, writing $\gfq[V \oplus
V^*] = \gfq[x_1 \upto x_n,y_1 \upto y_n]$ (where $x_1 \upto x_n$ is
the standard basis of $V$ and $y_1 \upto y_n$ is the dual basis) and
setting
\[
\renewcommand{\arraystretch}{1.9}
\begin{array}{llllll}
  f_i & := \displaystyle \prod_{h \in U_n \cdot x_i} h, \qquad & f_i^*
  & := \displaystyle \prod_{h \in U_n \cdot y_{n+1-i}} h \qquad & (1
  \le i \le n), \\
  \fti_i & := f_i^{q-1}, \qquad & \fti_i^* & := f_i^{* q-1} \qquad &
  (1 \le i \le n), \\
  u_j & : = \displaystyle \sum_{k=1}^n x_k^{q^j} y_k, \qquad
  \text{and} \qquad & u_{-j} & := \displaystyle \sum_{k=1}^n x_k
  y_k^{q^j} \qquad & (j \ge 0),
\end{array}
\]
we prove:

{\em
  \begin{enumerate}
  \item \label{0a} If $n \ge 2$, then $\gfq[V \oplus
    V^*]^{U_n}=\gfq[f_1 \upto f_n,f_1^* \upto f_n^*,u_{2-n} \upto
    u_{n-2}]$ is generated by $4n-3$ invariants subject to $2n-3$
    relations.
  \item \label{0b} If $n \ge 1$, then $\gfq[V \oplus
    V^*]^{B_n}=\gfq[\fti_1 \upto \fti_n,\fti_1^* \upto
    \fti_n^*,u_{1-n} \upto u_{n-1}]$ is generated by $4n-1$ invariants
    subject to $2n-1$ relations.
  \end{enumerate}
  The relations are given explicitly in \tref{main}. In particular,
  both $\gfq$-algebras $\gfq[V \oplus V^*]^{U_n}$ and $\gfq[V \oplus
  V^*]^{B_n}$ are complete intersections.
} \\

The special case $n = 2$ and~$q$ a prime of~\eqref{0a} is included in
\mycite{Mara.Neusel}. Observe that the number of generators and the
number of relations are independent of~$q$.

Notice that by a result of \mycite{KW} and \mycite{Gordeev:87}, the
invariant ring $K[V \oplus V^*]^G$ can only be a complete intersection
if $G$ is generated by pseudo-reflections. However, even when $G$ is
generated by pseudo-reflections, it seems to be rare that $K[V \oplus
V^*]^G$ is a complete intersection. A counterexample, possibly the
smallest, is given by the symmetric group $S_3$ acting irreducibly on
$V = \CC^2$. We checked that by using the computer algebra system
MAGMA (see~[\citenumber{magma}]). See also \mycite{Alev:Foissy}.

Another indication that the invariant rings $\gfq[V \oplus V^*]^{U_n}$
and $\gfq[V \oplus V^*]^{B_n}$ are ``lucky'' cases comes from
comparing them to $\gfq[V \oplus V]^{U_n}$ and $\gfq[V \oplus
V]^{B_n}$. Using MAGMA, we find that for $n = 3$ and $q = 2$ or~$3$,
$\gfq[V \oplus V]^{U_3}$ requires a minimum of 12 or 16 generators,
respectively, and fails to be Cohen--Macaulay for $q = 3$. \\

The paper is organized as follows: in the first section we start by
determining the invariant field $K(V \oplus V^*)^G$ for all finite
groups $G \le \GL(V)$ for which $K[V]^G$ and $K[V^*]^G$ is known.
Then we prove a lemma (see~\ref{1lA}) which gives a sufficient
condition for a $K$-algebra to admit a particular presentation by
generators and relations. This lemma will be used for all results in
this paper. The main part of the paper is the second section, where we
produce relations between our claimed generators, and show that they
satisfy the hypotheses of \lref{1lA}. This leads to the main result,
\tref{main}. In the final section we study the invariant ring 
$\gfq[V \oplus V^*]^G$ for $G=\SL_n(\gfq)$ or $\GL_n(\gfq)$. 
We make a conjecture about $\gfq[V \oplus V^*]^{\GL_n(\gfq)}$
(see~\ref{3cGL}). \\

We should mention the role of experimental work in the genesis of this
paper. The starting point was the explicit computation of $\FF_q[V
\oplus V^*]^{U_n}$ for $n = 3$ and $q = 2,3$ (and its approximate
computation for $q = 4,5$) by using MAGMA. This prompted us to guess
the generators of $\FF_q[V \oplus V^*]^{U_n}$ for $n = 3$. By
obtaining the relations appearing in \exref{1ex23}($U_3$) and using
\lref{1lA}, we were able to prove the case $n = 3$ of
\tref{main}\eqref{maina}. Turning to the case $n = 4$, we used MAGMA
again to produce some relations between our conjectured generators for
several~$q$. From these, we guessed (and verified) the relations for
general~$q$ appearing in \exref{1ex23}($U_4$). We observed that these
relations again satisfy the hypotheses of \lref{1lA}. We then pushed
this up to $n = 5$ and~$6$. Only then were we able to conjecture the
general relations given in \tref{main}\eqref{maina} and to observe
that they can be interpreted as special cases of the determinant
identity from \lref{2lDet}. This led to the (computer-free) proof of
part~\eqref{maina} of \tref{main}, and part~\eqref{mainb} was then
deduced quite easily. So it is justified to say that this paper owes
its existence to MAGMA.

\section{Preliminaries} \label{1sPreliminaries}

Let $K$ be a field, $n$ a positive integer, and $V=K^n$. The general
linear group $\GL_n(K)$ acts naturally on $V$. It also acts on the
dual space $V^*$ by $\sigma \cdot \lambda := \lambda \circ
\sigma^{-1}$ for $\sigma \in \GL_n(K)$ and $\lambda \in V^*$. This
induces an action on the polynomial ring $K[V \oplus V^*]$, which by
convention we take to be the symmetric algebra of $V \oplus V^*$.
(Since $V \oplus V^*$ is self-dual, the more standard convention of
taking the symmetric algebra of the dual yields the same result.) We
can write
\[
K[V \oplus V^*] = K[x_1 \upto x_n,y_1 \upto y_n],
\]
where $x_1 \upto x_n$ is the standard basis of $V = K^n$ and 
$y_1\upto y_n$ is the dual basis. 

The natural pairing
\[
V \otimes V^* \to K, \ v \otimes \lambda \mapsto \lambda(v)
\]
is clearly invariant under the action of $\GL_n(K)$. Since $V
\otimes V^*$ is embedded into $K[V \oplus V^*]$, this gives rise
to an invariant~$u_0$. Explicitly, we obtain
\[
u_0 = \sum_{j=1}^n x_j y_j \in K[V \oplus V^*]^{\GL_n(K)}.
\]

We start by looking at the invariant field $K(V \oplus V^*)^G$.
Recall that for some important finite subgroups $G \subseteq
\GL_n(K)$, generators of the invariant ring $K[V]^G$ are known. If $K$
is finite, these subgroups include $U_n$, $B_n$, $\SL_n(K)$, and
$\GL_n(K)$ (see \mycite[Proposition~5.5.6 and Theorems~8.1.5
and~8.1.8]{lsm}).

\begin{prop} \label{1pField}
  Let $G \subseteq \GL_n(K)$ be a finite subgroup. Then $K(V \oplus
  V^*)^G$ is generated, as a field extension of $K$, by $K[V]^G$,
  $K[V^*]^G$, and $u_0$.
\end{prop}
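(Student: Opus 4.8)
Write $L = K(V \oplus V^*)$ and let $F \subseteq L$ be the subfield generated over $K$ by $K[V]^G$, $K[V^*]^G$, and $u_0$. Clearly $F \subseteq L^G$, since each of these three pieces is $G$-invariant. The goal is to show the reverse inclusion, and the natural strategy is a degree (or transcendence-plus-degree) count: since $[L:L^G] = |G|$ (as $G$ acts faithfully on $L$), it suffices to show $[L:F] \le |G|$, or better, to exhibit $L$ as a sequence of extensions of $F$ whose degrees multiply to at most $|G|$.

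\smallskip

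\textbf{Key steps.} First I would reduce to the rational-function field $K(V)$ and its subfield $K(V)^G$. The point is that $V^*$ can be recovered from $V$ together with the pairing: the coordinates $y_1, \dots, y_n$ are related to $x_1, \dots, x_n$ through the $n$ scalars obtained by pairing $V^*$ against a basis of $V$, but after twisting by $g \in G$ these become $n$ genuinely new quantities. Concretely, consider the $G$-action on $K(V)$: a classical fact (the "no-name lemma" / Galois descent for the rational function field, valid because $K(V)/K(V)^G$ is Galois with group $G$ and linear actions give a trivial cocycle) says that $K(V \oplus V^*)^G$ is a rational function field over $K(V)^G$ in $n$ variables, and moreover these $n$ variables can be taken to be explicit linear combinations of $y_1, \dots, y_n$ with coefficients in $K(V)$ — in fact the entries of $M \cdot (y_1, \dots, y_n)^{\mathrm t}$ for a suitable matrix $M \in \GL_n(K(V))$ that trivializes the cocycle. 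Then $u_0$ provides one such coordinate directly, and the $G$-translates of $u_0$ give the others; since $K[V]^G$ already generates $K(V)^G$ as a field (this is standard, or follows from the cited descriptions of $K[V]^G$ for the relevant $G$), and $K[V^*]^G$ together with $u_0$ and $K(V)^G$ recovers all of $y_1, \dots, y_n$, we are done once the descent statement is in place.

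\smallskip

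Alternatively, and perhaps more self-containedly, I would argue directly. Set $E := K(V)^G \cdot K[V^*]^G \cdot K(u_0) \subseteq F$; it is enough to show $L = E(V) = E(x_1, \dots, x_n)$ has degree $\le |G|$ over $E$, because $E \subseteq L^G$ forces equality $E \cdot K[V]^G$-closure $= L^G$ and transcendence degrees match. For the degree bound: the stabilizer in $G$ of the point $(y_1, \dots, y_n) \in V^*$, call it $H$, acts on $V$, and one checks $K(V)$ has degree $|G/H| \cdot [\text{something}]$ — this is where one must be careful. The cleanest route really is the no-name lemma, so I would invoke it: $K(V)/K(V)^G$ is Galois with group $G$, hence $K(V \oplus V^*)/K(V \oplus V^*)^G$ is too (same group, since $G$ acts faithfully on the larger field as well), and the linear $G$-representation $V^*$ over the $G$-field $K(V)$ is, by Speiser's theorem / Hilbert 90 in the form "$H^1(G, \GL_m(\text{field})) = 1$", $G$-equivariantly isomorphic over $K(V)$ to the trivial representation $K(V)^{\oplus n}$. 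Picking out the trivial-isotypic coordinates yields $n$ elements $z_1, \dots, z_n \in K(V \oplus V^*)$, each a $K(V)$-linear combination of the $y_i$, that are $G$-fixed and algebraically independent over $K(V)^G$, with $K(V \oplus V^*)^G = K(V)^G(z_1, \dots, z_n)$. Finally I would identify the $z_i$ with elements already available in $F$: the diagonal pairing $u_0 = \sum x_j y_j$ and its role, plus the invariants in $K[V^*]^G$, let us solve back for a full set of such $z_i$ modulo $K(V)^G$, which equals $\operatorname{Frac}(K[V]^G)$.

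\smallskip

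\textbf{Main obstacle.} The delicate point is the last identification step: showing that the abstract trivializing coordinates $z_1, \dots, z_n$ from the no-name lemma can actually be chosen inside $F = K\bigl(K[V]^G, K[V^*]^G, u_0\bigr)$, rather than merely inside some larger field. Equivalently, one must check that $u_0$ together with $K[V^*]^G$ and $\operatorname{Frac}(K[V]^G)$ already generates all $n$ of the required trivial-isotypic components — a priori $u_0$ gives only one of them, and one needs the ambient field $K(V)^G$ (not just $K[V^*]^G$) to fabricate the rest. I expect this to come down to the observation that over $K(V)$ the $G$-module $V^*$ is cyclic (generated by a single vector, e.g. $u_0$ viewed appropriately), so one trivializing coordinate plus the $K(V)$-module structure suffices; making that precise, and making sure no element outside $F$ sneaks in, is the crux.
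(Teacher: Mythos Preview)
Your identified ``main obstacle'' is a genuine gap, and the fixes you sketch do not close it. The $G$-translates of $u_0$ are all equal to $u_0$ itself (it is $G$-invariant), so they cannot furnish additional trivializing coordinates. More structurally: the $n$-dimensional $K(V)^G$-space of $G$-invariant $K(V)$-linear forms in $y_1,\dots,y_n$ that Speiser's theorem hands you is not visibly spanned by elements of $F$. The only such linear forms directly available in $F$ are the $K(V)^G$-multiples of $u_0$, a one-dimensional subspace; elements of $K[V^*]^G$ contribute nothing linear in the $y_j$ unless $V^*$ has a nonzero $G$-fixed vector. So the no-name route, while giving the correct abstract shape of $K(V\oplus V^*)^G$, leaves you exactly where you started on the question of whether $F$ is all of it. Your speculation that $V^*$ is ``cyclic'' over $K(V)$ does not help either: after Speiser the module is \emph{trivial}, so cyclicity is vacuous.

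The paper's argument is much shorter and bypasses Speiser entirely; your ``alternative'' paragraph gestures toward it but then retreats. The key idea is to bring in the $G\times G$ action. Since $K[V\oplus V^*]^{G\times G}$ is generated by $K[V]^G$ and $K[V^*]^G$, the extension $K(V\oplus V^*)$ over the field generated by these is Galois with group $G\times G$; hence $K(V\oplus V^*)/F$ is Galois with group a subgroup of $G\times G$. Now a two-line computation shows that any $(\sigma,\tau)\in G\times G$ fixing $u_0$ lies on the diagonal: rewrite $(\sigma,\tau)(u_0)=u_0$ as $(\sigma\tau^{-1},\mathrm{id})(u_0)=u_0$, and use the algebraic independence of the $y_j$ over $K[x_1,\dots,x_n]$ to conclude $\sigma\tau^{-1}$ fixes each $x_j$, so $\sigma=\tau$. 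The Galois correspondence then gives $F\supseteq K(V\oplus V^*)^G$ immediately. This is precisely the ``$E(x_1,\dots,x_n)=L$'' step you flagged as needing care and then dropped; it is in fact the whole proof.
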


\begin{proof}
  Let $f_1 \upto f_l$ (respectively $g_1 \upto g_m$) be generators of
  the $K$-algebra $K[V]^G$ (respectively $K[V^*]^G$). The group $G
  \times G$ acts in the obvious way on $V \oplus V^*$, and it follows
  that
  \[
  K[V \oplus V^*]^{G \times G} = K\left[f_1 \upto f_l,g_1 \upto
    g_m\right].
  \]
  So $K(V \oplus V^*)$ is Galois as a field extension of $K\left(f_1
    \upto f_l, g_1,\dots, g_m\right)$ with group $G \times G$. It
  follows that it is also Galois as a field extension of $L :=
  K\left(f_1 \upto f_l, g_1,\dots, g_m, u_0\right)$. Clearly $L
  \subseteq K(V \oplus V^*)^G$, so if we can show that the Galois
  group $\gal{K(V \oplus V^*)/L}$ is contained in $G$ embedded diagonally, then Galois
  theory yields $K(V \oplus V^*)^G = L$.
  
  So take an arbitrary element from this Galois group $\gal{K(V \oplus
    V^*)/L}$, which we can write as $(\sigma,\tau) \in G \times G$. We
  need to show that $\sigma = \tau$. We have
  \[
  (\sigma \tau^{-1},\id)(u_0) =(\sigma
  \tau^{-1},\id)\left((\tau,\tau)(u_0)\right) = (\sigma,\tau)(u_0) =
  u_0.
  \]
  Since the $y_i$ are algebraically independent over $K[x_1 \upto
  x_n]$, this shows that $(\sigma \tau^{-1})(x_j) = x_j$ for all~$j$,
  so $\sigma \tau^{-1} = \id$. This concludes the proof.
\end{proof}

We have an involution
\[
\mapl{*}{K[V \oplus V^*]}{K[V \oplus V^*]}{x_i}{y_{n+1-i}},\ y_i
\mapsto x_{n+1-i}.
\]
For $\sigma \in \GL_n(K)$ we set
\[
\sigma^* :=
\begin{pmatrix}
0 & \cdots & 1 \\
\vdots & \text{\reflectbox{$\ddots$}} & \vdots \\
1 & \cdots & 0
\end{pmatrix} \cdot (\sigma^{-1})^T \cdot \begin{pmatrix}
0 & \cdots & 1 \\
\vdots & \text{\reflectbox{$\ddots$}} & \vdots \\
1 & \cdots & 0
\end{pmatrix}.
\]
It is easy to verify that for $\sigma \in \GL_n(K)$ and $f \in K[V
\oplus V^*]$, the rule
\[
(\sigma \cdot f)^* = \sigma^* \cdot f^*
\]
holds. So if $G \subseteq \GL_n(K)$ is stable under the automorphism
$*$ of $\GL_n(K)$, then $*$ induces an automorphism of the invariant
ring $K[V \oplus V^*]^G$, and this automorphism restricts to an
isomorphism between $K[V]^G$ and $K[V^*]^G$.

\begin{ex}
  The groups $U_n$, $B_n$, $\SL_n(K)$ and $\GL_n(K)$ are $*$-stable.
\end{ex}

We obtain the following corollary from \pref{1pField}.

\begin{cor} \label{1cField}
  Let $G \subseteq \GL_n(K)$ be a $*$-stable finite subgroup. Assume
  that $K[V]^G$ is generated by the invariants $f_1 \upto f_m$ (as a
  $K$-algebra). Then $K(V \oplus V^*)^G$ is generated (as a field
  extension of $K$) by $f_i$, $f_i^*$ ($i = 1 \upto m$), and~$u_0$.
\end{cor}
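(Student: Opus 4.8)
The plan is to deduce this immediately from \pref{1pField} together with the $*$-stability of $G$. By \pref{1pField}, the field $K(V \oplus V^*)^G$ is generated as an extension of $K$ by the union of the three sets $K[V]^G$, $K[V^*]^G$, and $\{u_0\}$. Since any $K$-algebra generating set of a subring $R \subseteq K(V \oplus V^*)$ certainly generates, as a field extension of $K$, the fraction field of $R$, it suffices to exhibit a $K$-algebra generating set of $K[V]^G$ and one of $K[V^*]^G$. For the former we are handed $f_1 \upto f_m$ by hypothesis.

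For $K[V^*]^G$ I would invoke the remark made just before the statement of the corollary: since $G$ is $*$-stable, the involution $*$ of $K[V \oplus V^*]$ preserves $K[V \oplus V^*]^G$ and restricts to a $K$-algebra isomorphism from $K[V]^G$ onto $K[V^*]^G$. Applying this isomorphism to the generators $f_1 \upto f_m$ shows that $f_1^* \upto f_m^*$ generate $K[V^*]^G$ as a $K$-algebra. Feeding these two generating sets, together with $u_0$, into \pref{1pField} yields
\[
K(V \oplus V^*)^G = K\bigl(f_1 \upto f_m,\, f_1^* \upto f_m^*,\, u_0\bigr),
\]
which is the asserted statement. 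I do not anticipate any genuine obstacle here: the substance of the corollary is entirely carried by \pref{1pField} and by the compatibility of $*$ with the $G$-action, both already established; the only point to keep track of is the (routine) passage from $K$-algebra generators of the two polynomial invariant rings to field generators of $K(V \oplus V^*)^G$.
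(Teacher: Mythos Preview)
Your argument is correct and matches the paper's approach exactly: the paper does not spell out a proof but simply records the statement as an immediate corollary of \pref{1pField}, relying on the observation made just before the corollary that $*$-stability of $G$ makes $*$ an isomorphism $K[V]^G \to K[V^*]^G$. You have filled in precisely those details.
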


For the proof of our main results we will use the following lemma. It
gives a sufficient condition for a $K$-algebra to admit a particular
presentation by generators and relations.

\begin{lemma} \label{1lA}
  Let $A$ be a graded algebra over $K$. Suppose that $A$ is an
  integral domain. Let $f_1 \upto f_n,g_1 \upto g_m,h_1 \upto h_l \in
  A$ be homogeneous elements of positive degree such that
  \begin{enumerate}
  \item \label{1lAa} $f_1 \upto f_n,g_1 \upto g_m$ form a homogeneous
    system of parameters of $A$ (e.i., they are algebraically
    independent and $A$ is an integral extension of the subalgebra
    formed by them) and
  \item \label{1lAb} for the field of fractions we have
    \[
    \Quot(A) = K(f_1 \upto f_n,g_1 \upto g_m,h_1 \upto h_l).
    \]
  \end{enumerate}
  Moreover, let $R_1 \upto R_l$ be homogeneous elements of the kernel
  of the homomorphism
  \[
  \mapl{\phi}{P := K[X_1 \upto X_n,Y_1 \upto Y_m,Z_1 \upto
    Z_l]}{A}{X_i}{f_i}, \ Y_i \mapsto g_i, \ Z_i \mapsto h_i,
  \]
  were $P$ is a polynomial ring graded in such a way that~$\phi$ is
  degree-preserving. Suppose that
  \begin{enumerate}
    \setcounter{enumi}{2}
  \item \label{1lAc} $X_1 \upto X_n,Y_1 \upto Y_m,R_1 \upto R_l$ form
    a homogeneous system of parameters of $P$ and
  \item \label{1lAd}%
    for
    \[
    x := \overline{X}_1 \cdots \overline{X}_n \in B := P/(R_1 \upto
    R_l)
    \]
    (where $\overline{X}_i$ denotes the class in $B$ of $X_i$), the
    localization $B_x := B[x^{-1}]$ is generated by $x^{-1}$,
    $\overline{X}_1 \upto \overline{X}_n$, and~$m$ further elements.
    Moreover, for $y := \overline{Y}_1 \cdots \overline{Y}_m$, $B_y$
    is generated by $y^{-1}$, $\overline{Y}_1 \upto \overline{Y}_m$,
    and~$n$ further elements. (Loosely speaking, this means that after
    localizing by~$x$ or~$y$, the relations allow us to eliminate~$l$
    of the generators.)
  \end{enumerate}
  Then
  \[
  A = K[f_1 \upto f_n,g_1 \upto g_m,h_1 \upto h_l].
  \]
  Moreover, $A$ is a complete intersection, and the kernel of~$\phi$
  is generated by $R_1 \upto R_l$.
\end{lemma}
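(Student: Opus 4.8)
My plan is to set $A':=K[f_1\upto f_n,g_1\upto g_m,h_1\upto h_l]\subseteq A$, so that $\phi$ factors as $P\xrightarrow{\pi}B\xrightarrow{\bar\phi}A'\hookrightarrow A$, where $B:=P/(R_1\upto R_l)$ and $\bar\phi$ is surjective onto $A'$; the two things to establish are that $\bar\phi$ is an isomorphism (which will give $\ker\phi=(R_1\upto R_l)$) and that $A'=A$. First I would extract from hypothesis~\eqref{1lAc} that, $P$ being a polynomial ring and hence Cohen--Macaulay, its homogeneous system of parameters $X_1\upto X_n,Y_1\upto Y_m,R_1\upto R_l$ is a regular sequence; reordering it so that $R_1\upto R_l$ come first (a reordering of a homogeneous system of parameters is again one, hence again a regular sequence), we see $R_1\upto R_l$ is a regular sequence in $P$, so $B$ is a complete intersection, Cohen--Macaulay of dimension $n+m$. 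Since $P$ is module-finite over $K[X_1\upto X_n,Y_1\upto Y_m,R_1\upto R_l]$, the classes $\overline{X}_1\upto\overline{X}_n,\overline{Y}_1\upto\overline{Y}_m$ form a homogeneous system of parameters of $B$, hence, $B$ being Cohen--Macaulay, a regular sequence.

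The crux will be to show $B$ is an integral domain. Since the regular sequence $\overline{X}_1\upto\overline{X}_n,\overline{Y}_1\upto\overline{Y}_m$ of the Cohen--Macaulay ring $B$ may be permuted, each $\overline{X}_i$ — and hence $x:=\overline{X}_1\cdots\overline{X}_n$ — is a nonzerodivisor, so $B\hookrightarrow B_x$. Hypothesis~\eqref{1lAd} says $B_x$ is generated over $K$ by $x^{-1}$, the algebraically independent elements $\overline{X}_1\upto\overline{X}_n$, and $m$ further elements; as $x^{-1}$ together with the $\overline{X}_i$ generate the localization of an $n$-variable polynomial ring at the product of its variables, $B_x$ is a homomorphic image of $S:=K[T_1\upto T_n,W_1\upto W_m]_{T_1\cdots T_n}$, a regular (hence normal) domain of dimension $n+m$. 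Because $\dim B_x=n+m$ as well ($x$ being a nonzerodivisor on the equidimensional ring $B$), and $S$ is a finitely generated domain over $K$, the surjection $S\to B_x$ has kernel of height $0$ and so is an isomorphism; thus $B_x$, and with it $B$, is a domain. Now $\operatorname{trdeg}_K\Quot(A)=n+m$ by~\eqref{1lAa} and $A$ is integral over $C:=K[f_1\upto f_n,g_1\upto g_m]\subseteq A'$, so $\dim A'=n+m=\dim B$; as $\bar\phi$ is a surjection of finitely generated graded domains over $K$ of equal dimension it is an isomorphism. This gives $\ker\phi=(R_1\upto R_l)$ and $A'\cong B$, a complete intersection, in particular a ring satisfying Serre's condition $S_2$. (I would also note here that $A$ is module-finite over the polynomial ring $C$, by finiteness of integral closure, $\Quot(A)/\Quot(C)$ being finite by~\eqref{1lAb}; so $A$ and $A'$ are Noetherian and $A/A'$ is a finitely generated $A'$-module.)

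There remains $A'=A$. Setting $\xi:=f_1\cdots f_n$ and $\eta:=g_1\cdots g_m$, nonzero in the domain $A$, and transporting the isomorphism $B\cong A'$ (under which $x\mapsto\xi$), hypothesis~\eqref{1lAd} says exactly that $A'_\xi\cong B_x\cong S$ and, symmetrically, $A'_\eta$ are normal domains. Since $A$ is integral over $A'$ with $\Quot(A)=\Quot(A')$, localization shows $A_\xi$ is integral over the normal domain $A'_\xi$ inside their common fraction field, so $A_\xi=A'_\xi$; likewise $A_\eta=A'_\eta$. Hence the finitely generated torsion $A'$-module $A/A'$ is supported inside $V(\xi)\cap V(\eta)$, and $\operatorname{ht}_{A'}(\xi,\eta)\ge2$: carried over to $B$, $V(x,y)=\bigcup_{i,j}V(\overline{X}_i,\overline{Y}_j)$ with $\operatorname{ht}_B(\overline{X}_i,\overline{Y}_j)=2$ for all $i,j$, since $\overline{X}_i,\overline{Y}_j$ is part of a regular sequence of the Cohen--Macaulay ring $B$. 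So $A/A'$ has support of codimension $\ge2$ in $\operatorname{Spec}A'$. If $A\ne A'$, I would pick $\mathfrak p\in\operatorname{Ass}_{A'}(A/A')$; then $\operatorname{ht}\mathfrak p\ge2$, so $\operatorname{depth}A'_{\mathfrak p}\ge2$ by $S_2$, while $A_{\mathfrak p}$, being a localization of the domain $A$ of positive dimension, has depth $\ge1$, so the depth lemma applied to $0\to A'_{\mathfrak p}\to A_{\mathfrak p}\to(A/A')_{\mathfrak p}\to0$ forces $\operatorname{depth}(A/A')_{\mathfrak p}\ge1$, contradicting $\mathfrak p\in\operatorname{Ass}$. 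Hence $A=A'$.

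I expect the last step — promoting the birational, codimension-one coincidence of $A'$ and $A$ to an actual equality — to be the main obstacle; it is exactly there that the complete-intersection (hence $S_2$) property of $A'$ is needed, and it accounts for the peculiar localized shape of hypothesis~\eqref{1lAd}, tailored so that $A'_\xi$ and $A'_\eta$ come out honestly normal while $\xi=\prod f_i$ and $\eta=\prod g_j$ jointly cut out a closed set of codimension $\ge2$. A secondary difficulty is the argument that $B$ is a domain, on which the assertion $\ker\phi=(R_1\upto R_l)$ rests.
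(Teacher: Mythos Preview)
Your argument is correct and follows the same architecture as the paper's: show $B=P/(R_1,\ldots,R_l)$ is a Cohen--Macaulay complete intersection of dimension $n+m$, use hypothesis~(d) to embed $B$ into a localized polynomial ring $B_x$ and conclude $B$ is a domain, deduce that $\bar\phi\colon B\to A'$ is an isomorphism by comparing dimensions, and finally promote $A'\subseteq A$ to an equality using that $A$ is integral over $A'$ with the same fraction field.

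The one genuine difference is in this last step. The paper packages it as follows: since $(\overline{X}_i,\overline{Y}_j)$ has height~$2$ in $B$, every height-one prime avoids $x$ or $y$, so $B_{\mathfrak p}$ is a localization of $B_x$ or $B_y$ and hence regular; together with $S_2$ this gives, via Serre's criterion, that $B\cong A'$ is \emph{normal}, whence $A\subseteq\Quot(A')$ integral over $A'$ forces $A=A'$ in one line. You instead transport the normality of $B_x$, $B_y$ to $A'_\xi$, $A'_\eta$, conclude $A_\xi=A'_\xi$ and $A_\eta=A'_\eta$, and then run a support/depth-lemma argument on $A/A'$. This is correct (your justification that $A/A'$ is a finitely generated $A'$-module via Japaneseness of $C$ is needed precisely so that $\operatorname{Ass}_{A'}(A/A')$ is nonempty), but it amounts to rederiving Serre's criterion by hand in this instance. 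Invoking normality directly is shorter and avoids the auxiliary finiteness argument.
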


\begin{proof}
  The first goal is to show that $B$ is an integral domain. We
  conclude from~\eqref{1lAc} that
  \begin{equation} \label{1eqDim0}
    \dim\left(\strut P/(R_1 \upto R_l,X_1 \upto X_n,Y_1 \upto
      Y_m)\right) = 0.
  \end{equation}
  Therefore $B$ is a complete intersection of dimension $n + m$.
  In particular, $B$ is Cohen--Macaulay (see
  \mycite[Proposition~18.13]{eis}). It follows from~\eqref{1eqDim0}
  that for every $i \in \{1 \upto n\}$, $\overline{X}_i$ lies in no
  minimal prime ideal of $B$. By the unmixedness theorem (see
  [\citenumber{eis}, Corollary~18.14]), all associated prime ideals of
  $(0)$ are minimal, so it follows that $\overline{X}_i$ is a
  non-zero-divisor. Since this holds for all~$i$, also~$x$ is a
  non-zero-divisor. Therefore $B$ embeds into $B_x$. In particular,
  $B_x$ has transcendence degree at least $n + m$. So it follows
  from~\eqref{1lAd} that $B_x$ is a localized polynomial ring and in
  particular an integral domain. This implies that $B$ is also an
  integral domain. Similarly, $B_y$ is a localized polynomial ring.
  This will be used in a moment.
  
  Now we show that $B$ is normal. Let ${\mathfrak p} \in \Spec(B)$ be
  a prime ideal of height one. It follows from~\eqref{1eqDim0} that
  for all $i \in \{1 \upto n\}$ and $j \in \{1 \upto m\}$ the ideal
  $(\overline{X}_i,\overline{Y}_j) \subseteq B$ has height~$2$.
  Therefore $\mathfrak p$ cannot contain both $x$ and $y$, so
  $B_{\mathfrak p}$ is a localization of $B_x$ or of $B_y$ and
  therefore normal. This shows that $B$ satisfies Serre's condition R1
  (see~[\citenumber{eis}, Theorem~11.5]).  Moreover, applying the
  unmixedness theorem again, we see that $B$ also satisfies the
  condition S2 (see [\citenumber{eis}, Theorem~11.5]). By Serre's
  criterion ([\citenumber{eis}, Theorem~11.5]), $B$ is normal.
  
  Consider the epimorphism
  \[
  \map{\psi}{B}{A' := K[f_1 \upto f_n,g_1 \upto g_m,h_1 \upto h_l]
    \subseteq A}
  \]
  induced from~$\phi$. It follows from~\eqref{1lAa} that $A'$ has
  dimension $n + m$, the same as $B$. Since $B$ is an integral domain,
  it follows that $\ker(\psi) = \{0\}$, so~$\psi$ is an isomorphism.
  In particular, $A'$ is normal. Applying~\eqref{1lAa} again, we see
  that $A$ is integral over $A'$. But by~\eqref{1lAb}, $A \subseteq
  \Quot(A')$, so the normality of $A'$ implies $A = A'$. We have
  already seen that $A' \cong B$ is a complete intersection. The
  injectivity of~$\psi$ means that the kernel of~$\phi$ is generated
  by the $R_i$. So the proof is complete.
\end{proof}

Readers may find it helpful to take a look at \exref{1ex23} already
now. There, \lref{1lA} is applied several times, so the example serves
to illustrate the less intuitive hypotheses~\eqref{1lAc}
and~\eqref{1lAd} of the lemma.

\section{The invariant ring of $U_n$ and $B_n$}


From now on, we assume that $K = \FF_q$ is a finite field with~$q$
elements.

\paragraph{Some invariants.} The homomorphisms
\begin{equation} \label{1eqFxy}
  \begin{aligned}
    \mapl{F}{& \gfq[V \oplus V^*]}{\gfq[V \oplus V^*]}{x_i}{x_i^q},
    \ y_i \mapsto y_i, \qquad \text{and} \\
    \mapl{F^*}{& \gfq[V \oplus V^*]}{\gfq[V \oplus V^*]}{x_i}{x_i},
    \ y_i \mapsto y_i^q,
  \end{aligned}
\end{equation}
commute with the action of $\GL_n(\gfq)$.
Therefore we get further invariants in $\gfq[V \oplus
V^*]^{\GL_n(\gfq)}$ by setting, for $i \ge 0$,
\[
u_i := F^i(u_0) = \sum_{j=1}^n x_j^{q^i} y_j \qquad \text{and} \qquad
u_{-i} := (F^*)^i(u_0) = \sum_{j=1}^n x_j y_j^{q^i} .
\]
Notice that $u_{-i} = u_i^*$ for all $i \in \ZZ$.

Now we turn our attention to the case where $G \in \{U_n,B_n\}$.
Apart from the invariants $u_i$ defined above, we get obvious
invariants by taking the orbit-products (for $1 \le i \le n$)
\[
f_i := \prod_{h \in U_n \cdot x_i} h = \prod_{\alpha_1 \upto
  \alpha_{i-1} \in \gfq} \left(\strut x_i + \smash{\sum_{j=1}^{i-1}}
  \alpha_j x_j\right).
\]
Then
\[
f_i^* = \prod_{h \in U_n \cdot y_{n+1-i}} h = \prod_{\alpha_1 \upto
  \alpha_{i-1} \in \gfq} \left(\strut y_{n+1-i} +
  \smash{\sum_{j=1}^{i-1}} \alpha_j y_{n+1-j}\right).
\]
The $f_i$ and $f^*_i$ are homogeneous of degrees $\deg(f_i) = \deg(f^*_i)
= q^{i-1}$. Similarly, we set (for $1 \le i \le n$)
\[
\fti_i := f_i^{q-1}=-\prod_{h \in B_n \cdot x_i} h,
\]
so that
\[
\fti_i^* = (f_i^*)^{q-1}=-\prod_{h \in B_n \cdot y_{n+1-i}} h .
\]
The minus sign comes from the fact that $\prod_{\xi \in {\mathbb{F}}^\times_{\! q}} \xi = -1$. 
It is well known (see \mycite[Section~4.5, Example~2]{NS} or
\mycite[Proposition~5.5.6]{lsm}) that
\begin{equation}\label{classical invariants}
  \gfq[V]^{U_n} = \gfq[f_1,\dots,f_n] \qquad \text{and} \qquad
  \gfq[V]^{B_n}=\gfq[\fti_1,\dots,\fti_n].
\end{equation}
So if we want to use \lref{1lA} for showing that the $f_i$, $f_i^*$
(respectively, $\fti_i$ and $\fti_i^*$) together with some $u_i$
generate the invariant ring, the hypotheses~\eqref{1lAa}
and~\eqref{1lAb} are already satisfied. So everything hinges on our
ability to find some suitable relations between the generators.

\paragraph{Some relations.} The following identity provides the source
of our relations.

\begin{lemma} \label{2lDet}
  Let $R$ be a commutative ring with identity element, $n$ a positive
  integer, and $a_{i,j}, b_{i,j} \in R$ ($i,j = 1 \upto n$).
  Then for $1 \le k \le n$ we have
  \begin{equation} \label{2eqDet}
    \begin{aligned}
      \sum_{i=1}^k \sum_{j=1}^{n+1-k} \sum_{l=1}^n & (-1)^{i+j+n+1}
      a_{i,l} b_{j,n+1-l} \cdot \det
      \begin{pmatrix}
        a_{1,1} & \cdots & a_{1,k-1} \\
        \vdots & & \vdots \\
        a_{i-1,1} & \cdots & a_{i-1,k-1} \\
        a_{i+1,1} & \cdots & a_{i+1,k-1} \\
        \vdots & & \vdots \\
        a_{k,1} & \cdots & a_{k,k-1}
      \end{pmatrix} \\
      & \cdot \det
      \begin{pmatrix}
        b_{1,1} & \cdots & b_{1,n-k} \\
        \vdots & & \vdots \\
        b_{j-1,1} & \cdots & b_{j-1,n-k} \\
        b_{j+1,1} & \cdots & b_{j+1,n-k} \\
        \vdots & & \vdots \\
        b_{n+1-k,1} & \cdots & b_{n+1-k,n-k}
      \end{pmatrix} \\
      & = \det
      \begin{pmatrix}
        a_{1,1} & \cdots & a_{1,k} \\
        \vdots & & \vdots \\
        a_{k,1} & \cdots & a_{k,k}
      \end{pmatrix} \cdot \det
      \begin{pmatrix}
        b_{1,1} & \cdots & b_{1,n+1-k} \\
        \vdots & & \vdots \\
        b_{n+1-k,1} & \cdots & b_{n+1-k,n+1-k}
      \end{pmatrix}.
    \end{aligned}
  \end{equation}
  In the case $k = 1$, the first determinant in the left-hand side
  of~\eqref{2eqDet} is to be understood as~$1$, and in the case $k =
  n$, the second determinant is to be understood as~$1$.
\end{lemma}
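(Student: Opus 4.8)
The plan is to prove the identity by a direct manipulation of determinants, recognizing the left-hand side as a Laplace-type expansion in disguise. First I would set up notation: let $A$ be the $n \times n$ matrix with entries $a_{i,j}$ and $B$ the $n \times n$ matrix with entries $b_{i,j}$. For a column index $l$, write $a_{\cdot,l}$ for the corresponding column of $A$. The key observation is that the triple sum over $i$ and $l$, with the cofactor determinant of the $(k-1)$-column submatrix of the first $k$ rows of $A$, looks like the cofactor expansion of the determinant of a $k \times k$ matrix whose first $k-1$ columns are columns $1 \upto k-1$ of $A$ and whose last column is $a_{\cdot,l}$. Indeed, for fixed $l$,
\[
\sum_{i=1}^k (-1)^{i+k} a_{i,l} \det
\begin{pmatrix}
a_{1,1} & \cdots & a_{1,k-1} \\
\vdots & & \vdots \\
\widehat{a_{i,1}} & \cdots & \widehat{a_{i,k-1}} \\
\vdots & & \vdots \\
a_{k,1} & \cdots & a_{k,k-1}
\end{pmatrix}
= \det
\begin{pmatrix}
a_{1,1} & \cdots & a_{1,k-1} & a_{1,l} \\
\vdots & & \vdots & \vdots \\
a_{k,1} & \cdots & a_{k,k-1} & a_{k,l}
\end{pmatrix}
=: D_l^A,
\]
expanding along the last column; here the hat denotes a deleted row. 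Similarly, for fixed $l$,
\[
\sum_{j=1}^{n+1-k} (-1)^{j+(n+1-k)} b_{j,n+1-l} \det
\begin{pmatrix}
b_{1,1} & \cdots & b_{1,n-k} \\
\vdots & & \vdots \\
\widehat{b_{j,1}} & \cdots & \widehat{b_{j,n-k}} \\
\vdots & & \vdots \\
b_{n+1-k,1} & \cdots & b_{n+1-k,n-k}
\end{pmatrix}
= \det
\begin{pmatrix}
b_{1,1} & \cdots & b_{1,n-k} & b_{1,n+1-l} \\
\vdots & & \vdots & \vdots \\
b_{n+1-k,1} & \cdots & b_{n+1-k,n-k} & b_{n+1-k,n+1-l}
\end{pmatrix}
=: D_l^B.
\]
A careful bookkeeping of signs shows that the global sign $(-1)^{i+j+n+1}$ in \eqref{2eqDet} is exactly $(-1)^{i+k} \cdot (-1)^{j+(n+1-k)}$, so the left-hand side of \eqref{2eqDet} collapses to $\sum_{l=1}^n D_l^A \, D_l^B$.

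The second step is to identify $\sum_{l=1}^n D_l^A D_l^B$ with the right-hand side of \eqref{2eqDet}. Here I would use the Cauchy--Binet formula. Let $P$ be the $k \times n$ matrix whose first $k-1$ columns are columns $1 \upto k-1$ of $A$ restricted to rows $1 \upto k$, and whose $l$-th ``extra'' data is encoded by the fact that we are really looking at a product of a $k \times (k-1+n)$ matrix with an $(k-1+n) \times k$-type object — more cleanly: $D_l^A$ is the determinant of the $k\times k$ matrix $[C \mid a_{\cdot,l}]$ where $C$ is the fixed $k \times (k-1)$ block of the first $k-1$ columns and first $k$ rows of $A$, and $D_l^B$ is $\det[C' \mid b_{\cdot,n+1-l}]$ where $C'$ is the fixed $(n+1-k) \times (n-k)$ block of the first $n-k$ columns and first $n+1-k$ rows of $B$. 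Then $\sum_l D_l^A D_l^B$ is a bilinear expression in the columns $a_{\cdot,1} \upto a_{\cdot,n}$ and $b_{\cdot,1} \upto b_{\cdot,n}$ which I would recognize, via the generalized Laplace expansion, as the expansion along the last column of the block matrix
\[
\det
\begin{pmatrix}
C & 0 \\
0 & C' \\
a_{\cdot,1}\text{-row data} & b\text{-row data}
\end{pmatrix}
\]
— more precisely, I would build the $n\times n$ matrix obtained by gluing the first $k$ columns of $A$ (rows $1\upto k$) in the top-left $k\times k$ block, the first $n+1-k$ columns of $B$ (rows $1\upto n+1-k$) in the bottom-right $(n+1-k)\times(n+1-k)$ block, zeros elsewhere, whose determinant is the right-hand side of \eqref{2eqDet}, and then expand that determinant via the Cauchy--Binet / Laplace identity grouping the last column of each block, which reproduces $\sum_l D_l^A D_l^B$ with precisely the signs computed above. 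The edge cases $k=1$ and $k=n$ are handled by the stated conventions (an empty determinant equals $1$), and the signs degenerate consistently.

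The main obstacle I anticipate is the sign accounting: matching $(-1)^{i+j+n+1}$ with the product of the two cofactor signs $(-1)^{i+k}$ and $(-1)^{j+n+1-k}$, and then matching the Laplace-expansion sign for pulling the last column out of a $k$-block sitting inside an $n \times n$ block matrix (which contributes a sign like $(-1)^{(\text{position})}$ depending on $k$ and $n$). An alternative, perhaps cleaner route that sidesteps some of this is to prove the identity first over the polynomial ring $\mathbb{Z}[a_{i,j}, b_{i,j}]$ by a bilinearity/polarization argument: both sides are multilinear and alternating in appropriate sets of columns, so it suffices to check the identity on ``standard'' tuples of columns (columns equal to coordinate vectors), where everything reduces to counting and the determinants are $0$ or $\pm 1$; specialization then gives the result over an arbitrary commutative ring $R$. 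I would present the Cauchy--Binet argument as the main proof and the polarization argument as the verification tool for the signs.
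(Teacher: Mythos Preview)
Your first step is exactly right and matches the paper: the sums over $i$ and $j$ are cofactor expansions along the last column, and the sign $(-1)^{i+j+n+1}$ factors as $(-1)^{i+k}\cdot(-1)^{j+(n+1-k)}$, so the left-hand side of \eqref{2eqDet} collapses to $\sum_{l=1}^n D_l^A\,D_l^B$ with $D_l^A$ and $D_l^B$ as you defined them.

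Where you diverge from the paper is in the second step, and here you are making life much harder than necessary. Your proposed Cauchy--Binet/block-matrix argument is vague as written: the block-diagonal matrix you describe has size $(n+1)\times(n+1)$, not $n\times n$, and no standard Laplace expansion of it produces a sum over an index $l$ running from $1$ to $n$ coupling the last columns of the two blocks. The polarization backup would work in principle, but it is overkill.

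The paper's observation, which you are missing, is that almost every term in $\sum_{l=1}^n D_l^A\,D_l^B$ is zero. Indeed, $D_l^A$ is the determinant of a matrix whose columns are columns $1,\ldots,k-1,l$ of (the first $k$ rows of) $A$; if $l\le k-1$ this matrix has two equal columns and $D_l^A=0$. Likewise $D_l^B$ has columns $1,\ldots,n-k,n+1-l$ of (the first $n+1-k$ rows of) $B$; if $n+1-l\le n-k$, i.e.\ $l\ge k+1$, then $D_l^B=0$. Hence only the term $l=k$ survives, and $D_k^A\cdot D_k^B$ is exactly the right-hand side of \eqref{2eqDet}. No Cauchy--Binet, no block matrices, no polarization needed.
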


\begin{proof}
  First, observe that, for $1 \le l \le n$,
  \begin{equation} \label{2eqX}
    \sum_{i=1}^k (-1)^i a_{i,l} \cdot \det
    \begin{pmatrix}
      a_{1,1} & \cdots & a_{1,k-1} \\
      \vdots & & \vdots \\
      a_{i-1,1} & \cdots & a_{i-1,k-1} \\
      a_{i+1,1} & \cdots & a_{i+1,k-1} \\
      \vdots & & \vdots \\
      a_{k,1} & \cdots & a_{k,k-1}
    \end{pmatrix}
    = (-1)^k \det
    \begin{pmatrix}
      a_{1,1} & \cdots & a_{1,k-1} & a_{1,l} \\
      \vdots & & \vdots & \vdots\\
      a_{k,1} & \cdots & a_{k,k-1} & a_{k,l} \\
    \end{pmatrix}
  \end{equation}
  and
  \begin{equation} \label{2eqY}
    \begin{aligned}
      \sum_{j=1}^{n+1-k} (-1)^j & b_{j,n+1-l} \cdot \det
      \begin{pmatrix}
        b_{1,1} & \cdots & b_{1,n-k} \\
        \vdots & & \vdots \\
        b_{j-1,1} & \cdots & b_{j-1,n-k} \\
        b_{j+1,1} & \cdots & b_{j+1,n-k} \\
        \vdots & & \vdots \\
        b_{n+1-k,1} & \cdots & b_{n+1-k,n-k}
      \end{pmatrix} \\
      & = (-1)^{n+1-k} \det
      \begin{pmatrix}
        b_{1,1} & \cdots & b_{1,n-k} & b_{1,n+1-l} \\
        \vdots & & \vdots & \vdots \\
        b_{n+1-k,1} & \cdots & b_{n+1-k,n-k} & b_{n+1-k,n+1-l}
      \end{pmatrix}.
    \end{aligned}
  \end{equation}
  Moreover, the right-hand side of~\eqref{2eqX}
  (respectively~\eqref{2eqY}) is zero if $l \le k-1$ (respectively $l
  \ge k+1$). So by multiplying~\eqref{2eqX} and~\eqref{2eqY} and
  summing over $l = 1 \upto n$, we obtain~\eqref{2eqDet}. Notice that
  the special cases $k = 1$ and $k = n$ pose no problems in the proof.
\end{proof}

We apply \lref{2lDet} to $R = \gfq[V \oplus V^*]$, $a_{i,j} =
x_j^{q^{i-1}}$, and $b_{i,j} = y_{n+1-j}^{q^{i-1}} =
\left(x_j^{q^{i-1}}\right)^*$. We wish to express the relations
obtained in this way in terms of the invariants $u_i$, $f_i$,
$\fti_i$, $f_i^*$, and $\fti_i^*$. First, note that the sums $\sum_{l
  = 1}^n a_{i,l} b_{j,n+1-l}$ in~\eqref{2eqDet} specialize to
\[
\sum_{l = 1}^n x_l^{q^{i-1}} y_l^{q^{j-1}} =
u_{i-j}^{q^{\min\{i-1,j-1\}}}.
\]
Therefore, setting
\[
d_{k,i} := \det
\begin{pmatrix}
  x_1 & x_2 & \cdots & x_k \\
  x_1^q & x_2^q & \cdots & x_k^q \\
  \vdots & \vdots & & \vdots \\
  x_1^{q^{i-1}} & x_2^{q^{i-1}} & \cdots & x_k^{q^{i-1}} \\
  x_1^{q^{i+1}} & x_2^{q^{i+1}} & \cdots & x_k^{q^{i+1}} \\
  \vdots & \vdots & & \vdots \\
  x_1^{q^k} & x_2^{q^k} & \cdots & x_k^{q^k}
\end{pmatrix}
\]
and shifting the summations indices~$i$ and~$j$ in~\eqref{2eqDet} down
by~1, we obtain
\begin{equation} \label{2eqD}
  \sum_{i=0}^{k-1} \sum_{j=0}^{n-k} (-1)^{i+j+n+1}
  u_{i-j}^{q^{\min\{i,j\}}} \cdot d_{k-1,i} \cdot d_{n-k,j}^* =
  d_{k,k} \cdot d_{n+1-k,n+1-k}^*
\end{equation}
for $1 \le k \le n$. The following lemma expresses the determinants
$d_{k,i}$ in terms of our invariants.

\begin{lemma} \label{2lC}
  For $1 \le k \le n$ and $0 \le i \le k$ we have
  \[
  d_{k,i} = \prod_{j=1}^k f_j \cdot \left(\sum_{1 \le j_1 < j_2 <
      \cdots < j_{k-i} \le k} \,\,\prod_{l=1}^{k-i}
    \fti_{j_l}^{q^{i+l-j_l}}\right).
  \]
  For $i = k$, the sum on the right-hand side should be interpreted
  as~$1$.
\end{lemma}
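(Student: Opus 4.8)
The plan is to recognise the whole family $d_{k,0} \upto d_{k,k}$ (for a fixed~$k$) as the coefficient sequence of a single additive polynomial, and then to compute that polynomial by a product formula. First I would introduce the $(k+1)\times(k+1)$ ``bordered Moore determinant''
\[
D_k(T) := \det\begin{pmatrix}
x_1 & \cdots & x_k & T \\
x_1^q & \cdots & x_k^q & T^q \\
\vdots & & \vdots & \vdots \\
x_1^{q^k} & \cdots & x_k^{q^k} & T^{q^k}
\end{pmatrix} \in \gfq[V \oplus V^*][T].
\]
Laplace expansion along the last column gives $D_k(T) = \sum_{i=0}^k (-1)^{k+i}\, d_{k,i}\, T^{q^i}$; in particular $D_k$ has $T$-degree $q^k$, with leading coefficient $d_{k,k} = \prod_{j=1}^k f_j$, the classical Moore determinant (\mycite[Proposition~5.5.6]{lsm}). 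Since $T \mapsto (T,T^q \upto T^{q^k})$ is $\gfq$-linear, $D_k(T)$ is an additive polynomial in~$T$, and it vanishes at each of $x_1 \upto x_k$ (a repeated column). As $x_1 \upto x_k$ are $\gfq$-linearly independent in $\gfq[V \oplus V^*]$, their $\gfq$-span, denoted $\langle x_1 \upto x_k\rangle$, has $q^k$ distinct elements, each a root of $D_k$; since $\gfq[V \oplus V^*]$ is a domain, a degree-$q^k$ polynomial with $q^k$ distinct roots is the product of its leading coefficient with the corresponding monic linear factors, so
\[
D_k(T) = \Bigl(\textstyle\prod_{j=1}^k f_j\Bigr)\cdot L_k(T), \qquad L_k(T) := \prod_{w \in \langle x_1 \upto x_k\rangle} (T - w).
\]
Comparing coefficients of $T^{q^i}$, it then suffices to show that the coefficient of $T^{q^i}$ in the monic additive polynomial $L_k(T)$ equals $(-1)^{k+i}\sum_{1 \le j_1 < \cdots < j_{k-i} \le k}\prod_{l=1}^{k-i}\fti_{j_l}^{q^{\,i+l-j_l}}$.

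Next I would produce a recursion for $L_k$. Writing $\langle x_1 \upto x_k\rangle$ as the disjoint union of the cosets $c x_k + \langle x_1 \upto x_{k-1}\rangle$ ($c \in \gfq$) and using that $L_{k-1}$ is additive, together with the polynomial identity $\prod_{c \in \gfq}(Z - cA) = Z^q - A^{q-1} Z$, one gets
\[
L_k(T) = \prod_{c \in \gfq} L_{k-1}(T - c x_k) = L_{k-1}(T)^q - L_{k-1}(x_k)^{q-1}\, L_{k-1}(T).
\]
Since $w \mapsto -w$ permutes $\langle x_1 \upto x_{k-1}\rangle$, one has $L_{k-1}(x_k) = \prod_w(x_k + w) = f_k$ directly from the definition of $f_k$, so $L_{k-1}(x_k)^{q-1} = \fti_k$ and hence $L_k(T) = L_{k-1}(T)^q - \fti_k\, L_{k-1}(T)$, with $L_0(T) = T$. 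Writing $L_k(T) = \sum_{i=0}^k c_{k,i}\, T^{q^i}$, this becomes the coefficient recursion $c_{k,i} = c_{k-1,i-1}^{\,q} - \fti_k\, c_{k-1,i}$, with the conventions $c_{0,0} = 1$ and $c_{k-1,-1} = c_{k-1,k} = 0$.

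Finally I would prove, by induction on~$k$, that $c_{k,i} = (-1)^{k+i}\sum_{1 \le j_1 < \cdots < j_{k-i}\le k}\prod_{l=1}^{k-i}\fti_{j_l}^{q^{\,i+l-j_l}}$ (the sum being~$1$ for $i = k$); combined with the first paragraph this finishes the proof, the two signs $(-1)^{k+i}$ cancelling. The case $k = 0$ is immediate. For the inductive step one splits the sum on the right according to whether the index~$k$ occurs among $j_1 \upto j_{k-i}$: the terms with $j_{k-i} < k$ reassemble, after raising the induction hypothesis for $c_{k-1,i-1}$ to the $q$-th power (which turns each exponent $q^{(i-1)+l-j_l}$ into $q^{i+l-j_l}$), into $c_{k-1,i-1}^{\,q}$; the terms with $j_{k-i} = k$ carry the factor $\fti_k^{q^{\,i+(k-i)-k}} = \fti_k$ coming from the last index, and reassemble into $-\fti_k\, c_{k-1,i}$. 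Adding the two pieces gives exactly the right-hand side of the recursion. The genuinely routine ingredients are the standard facts about additive ($q$-)polynomials over a domain and the Moore determinant identity for $d_{k,k}$; the one point I expect to need real care — and the only true obstacle — is the sign-and-exponent bookkeeping in this splitting, i.e.\ verifying that the two halves of the subset sum match $c_{k-1,i-1}^{\,q}$ and $-\fti_k c_{k-1,i}$ on the nose, including that the powers of $-1$ behave correctly in every characteristic.
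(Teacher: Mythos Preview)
Your argument is correct and follows essentially the same route as the paper: both recognise the $d_{k,i}$ as the coefficients of the bordered Moore determinant (the paper uses $x_{k+1}$ in place of your indeterminate $T$), factor it as $d_{k,k}$ times the monic additive polynomial with root set $\langle x_1,\dots,x_k\rangle$, derive the recursion $c_{k,i}=c_{k-1,i-1}^{\,q}\pm\fti_k\,c_{k-1,i}$ from $L_k=L_{k-1}^q-\fti_k L_{k-1}$ (equivalently, from the product expansion of $f_{k+1}$), and solve it by the same subset-splitting induction. The only cosmetic differences are that you cite the Moore identity $d_{k,k}=\prod_j f_j$ rather than reproving it, and your $c_{k,i}$ carries an extra $(-1)^{k+i}$ relative to the paper's, whence the sign discrepancy in the recursion; working with an abstract $T$ also lets you sidestep the paper's brief excursion to an auxiliary $x_{n+1}$ when $k=n$.
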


\begin{proof}
  Most of the ideas in the proof are taken from \mycite{wil}. We first
  treat the case $i = k$ using induction on~$k$. We have
  \[
  d_{1,1} = x_1 = f_1.
  \]
  Now we go from~$k$ to $k+1$. Substituting $x_{k+1} = \sum_{j=1}^k
  \alpha_j x_j$ with $\alpha_j \in \FF_q$ into $d_{k+1,k+1}$
  yields~$0$.  Since the $x_{k+1}$-degree of $d_{k+1,k+1}$ is $q^k$,
  we conclude that as polynomials in $x_{k+1}$, both $d_{k+1,k+1}$ and
  $f_{k+1}$ have the same roots. So they are equal up to a factor in
  $\FF_q(x_1 \upto x_k)$. By comparing leading coefficients, we see
  that
  \begin{equation} \label{2eqbkk}
    d_{k+1,k+1} = d_{k,k} \cdot f_{k+1}.
  \end{equation}
  (This equation even holds for $k = n$ if we define $f_{n+1} :=
  \prod_{\alpha_1 \upto \alpha_n \in \FF_q} \left(x_{n+1} +
    \sum_{j=1}^n \alpha_j x_j\right)$ with an additional indeterminate
  $x_{n+1}$.) From~\eqref{2eqbkk}, we obtain the desired result for
  $d_{k+1,k+1}$ by induction.
  
  Expanding the determinant $d_{k+1,k+1}$ along the last column gives
  \[
  d_{k+1,k+1} = \sum_{i=0}^k (-1)^{k+i} d_{k,i} x_{k+1}^{q^i}.
  \]
  So by~\eqref{2eqbkk} we can write
  \[
  f_{k+1} = \sum_{i=0}^k (-1)^{k+i} c_{k,i} x_{k+1}^{q^i}
  \]
  with $c_{k,i} := d_{k,i}/d_{k,k} \in \FF_q[x_1 \upto x_k]$. So we
  need to show that
  \begin{equation} \label{2eqC}
    c_{k,i} = \sum_{1 \le j_1 < j_2 < \cdots < j_{k-i} \le k}
    \,\,\prod_{l=1}^{k-i} \fti_{j_l}^{q^{i+l-j_l}}.
  \end{equation}
  Again we use induction on~$k$, this time starting with $k = 0$. We
  have $f_1 = x_1$, so $c_{0,0} = 1$ as claimed. For $0 < k \le n$ we
  have
  \begin{align*}
    f_{k+1} & = \prod_{\alpha_1 \upto \alpha_k \in \FF_q} \left(\strut
      x_{k+1} + \smash{\sum_{j=1}^k \alpha_j x_j}\right) =
    \prod_{\alpha_k \in \FF_q} f_k(x_1 \upto x_{k-1},x_{k+1}+\alpha_k x_k) \\
    & = \prod_{\alpha_k \in \FF_q} \left(\strut f_k(x_1 \upto
      x_{k-1},x_{k+1}) + \alpha_k f_k(x_1 \upto x_{k-1},x_k)\right) \\
    & = f_k(x_1 \upto x_{k-1},x_{k+1})^q - f_k(x_1 \upto
    x_{k-1},x_{k+1}) \cdot \fti_k \\
    & = \sum_{i=0}^{k-1} (-1)^{k+i+1} \left(c_{k-1,i}^q \cdot
      x_{k+1}^{q^{i+1}} - \fti_k c_{k-1,i} \cdot
      x_{k+1}^{q^i}\right).
  \end{align*}
  This yields the recursive formula
  \[
  c_{k,i} = c_{k-1,i-1}^q + \fti_k c_{k-1,i},
  \]
  where we set $c_{k-1,-1} = c_{k-1,k} := 0$. For $i = k$ we have
  $c_{k,i} = 1$, satisfying~\eqref{2eqC} by convention. For $0 < i <
  k$ we use induction and obtain
  \begin{align*}
    c_{k,i} & = \! \sum_{1 \le j_1 < j_2 < \cdots < j_{k-i} \le k-1}
    \,\,\prod_{l=1}^{k-i} \left(\fti_{j_l}^{q^{i+l-j_l-1}}\right)^q
    + \!\! \sum_{1 \le j_1 < j_2 < \cdots < j_{k-i-1} \le k-1}
    \prod_{l=1}^{k-i-1} \fti_{j_l}^{q^{i+l-j_l}} \fti_k \\
    & = \sum_{1 \le j_1 < j_2 < \cdots < j_{k-i} \le k}
    \,\,\prod_{l=1}^{k-i} \fti_{j_l}^{q^{i+l-j_l}}
  \end{align*}
  as desired. (No problem arises in the special case $i = k-1$.) For
  $i = 0$ we obtain
  \[
  c_{k,i} = \fti_k c_{k-1,0} = \fti_k \cdot \prod_{l=1}^{k-1} \fti_l =
  \prod_{l=1}^k \fti_l.
  \]
  This completes the proof.
\end{proof}

It follows from \lref{2lC} that both sides of~\eqref{2eqD} are
divisible by $\prod_{j=1}^{k-1} f_j \cdot \prod_{j=1}^{n-k}
f_j^*$. Therefore, setting
\begin{equation} \label{2eqDickson}
  c_{s,t} := \sum_{1 \le j_1 < j_2 < \cdots < j_{s-t} \le s}
  \,\,\prod_{l=1}^{s-t} \fti_{j_l}^{q^{t+l-j_l}} = \sum_{1 \le j_1 <
    j_2 < \cdots < j_{s-t} \le s} \,\,\prod_{l=1}^{s-t}
  f_{j_l}^{q^{t+l-j_l} (q-1)}
\end{equation}
for $0 \le t < s \le n$ and $c_{s,s} := 1$ for $0 \le s \le n$, we
obtain the relation
\begin{equation*} \tag{$R_k$} \label{2eqRk}
  \sum_{i=0}^{k-1} \sum_{j=0}^{n-k} (-1)^{i+j+n+1} c_{k-1,i} \cdot
  c_{n-k,j}^* \cdot u_{i-j}^{q^{\min\{i,j\}}} - f_k \cdot f_{n+1-k}^*
  = 0
\end{equation*}
for $1 \le k \le n$. We deduce some further relations
from~\eqref{2eqRk} by applying the homomorphisms $F$ and $F^*$ (see
\eqref{1eqFxy}). This yields
\begin{align*} \tag{$R_k^+$} \label{2eqRkp}
  \sum_{i=0}^{k-1} \sum_{j=0}^{n-k} (-1)^{i+j+n+1} c_{k-1,i}^q \cdot
  c_{n-k,j}^* \cdot u_{i-j+1}^{q^{\min\{i+1,j\}}} - f_k^q \cdot
  f_{n+1-k}^* & = 0 \\
  \intertext{and} \tag{$R_k^-$} \label{2eqRkm} \sum_{i=0}^{k-1}
  \sum_{j=0}^{n-k} (-1)^{i+j+n+1} c_{k-1,i} \cdot c_{n-k,j}^{* q}
  \cdot u_{i-j-1}^{q^{\min\{i,j+1\}}} - f_k \cdot f_{n+1-k}^{* q} & =
  0.
\end{align*}
The relations produced so far involve the $U_n$-invariants $f_i$,
$f_i^*$, and $u_i$. In order to obtain relations between the
$B_n$-invariants $\fti_i$, $\fti_i^*$, and $u_i$, we raise $f_k \cdot
f_{n+1-k}^*$ and the remaining sum in~\eqref{2eqRk} to the $(q-1)$st
power. This yields
\begin{equation*} \tag{$\tilde{R}_k$} \label{3eqRk}
  \left(\sum_{i=0}^{k-1} \sum_{j=0}^{n-k} (-1)^{i+j} c_{k-1,i} \cdot
    c_{n-k,j}^* \cdot u_{i-j}^{q^{\min\{i,j\}}}\right)^{q-1} -
  \fti_k \cdot \fti_{n+1-k}^* = 0.
\end{equation*}
Furthermore, by subtracting the $\tilde{f}_k$-fold of~\eqref{2eqRk}
from~\eqref{2eqRkp}, we obtain
\begin{equation*} \tag{$\tilde{R}_k^+$} \label{3eqRkp}
  \sum_{i=0}^{k-1} \sum_{j=0}^{n-k} (-1)^{i+j} \left(c_{k-1,i}^q \cdot
    c_{n-k,j}^* \cdot u_{i-j+1}^{q^{\min\{i+1,j\}}} - \fti_k \cdot
    c_{k-1,i} \cdot c_{n-k,j}^* \cdot u_{i-j}^{q^{\min\{i,j\}}}\right) =
  0.
\end{equation*}

\begin{rem}
  \begin{enumerate}
  \item It may be of interest that $c_{s,t}$ is the $t$th Dickson
    invariant in $x_1 \upto x_s$ (see \mycite[Section~8.1]{lsm} or
    \mycite{wil}).  This follows from the proof of \lref{2lC}.
  \item It is easy to see that the relations~\eqref{2eqRk},
    \eqref{2eqRkp}, \eqref{2eqRkm}, \eqref{3eqRk}, and~\eqref{3eqRkp}
    are homogeneous.  (Their degrees are listed on
    page~\pageref{2RelDegs}.)  \remend
  \end{enumerate}
  \renewcommand{\remend}{}
\end{rem}

\paragraph{Main result.} 
We are now ready to prove the main result of this paper.

\begin{theorem} \label{main}
  With the above notation, we have:
  \begin{enumerate}
  \item \label{maina} If $n \ge 2$, then
    \[
    \gfq[V \oplus V^*]^{U_n} = \gfq[f_1,\dots,f_n,f_1^* \upto
    f_n^*,u_{2-n} \upto u_{n-2}]
    \]
    is generated by $4n - 3$ invariants. If $n \ge 3$, the ideal of
    relations has the following $2n - 3$ generators:
    \begin{equation} \label{2eqRels}
      R_1^+,R_2,R_3^-,R_3,R_4^-,R_4,R_5^- \upto
      R_{n-2},R_{n-1}^-,R_{n-1},R_n^-.
    \end{equation}
    If $n=2$, then the ideal of relations is generated by
    \begin{equation} \label{1eqR2}
      u_0^q - (f_1 f_1^*)^{q-1} u_0 - f_1^q f_2^* - f_1^{*q} f_2 = 0.
    \end{equation}
  \item \label{mainb} If $n \ge 1$, then
    \[
    \gfq[V \oplus V^*]^{B_n} = \gfq[\fti_1 \upto \fti_n,\fti_1^* \upto
    \fti_n^*,u_{1-n} \upto u_{n-1}]
    \]
    is generated by $4n-1$ elements, and the ideal of relations has
    the following $2n - 1$ generators:
    \begin{equation} \label{3eqRels}
      \tilde{R}_1,\tilde{R}_1^+,\tilde{R}_2,\tilde{R}_2^+ \upto
      \tilde{R}_{n-1},\tilde{R}_{n-1}^+,\tilde{R}_n.
    \end{equation}
  \end{enumerate}
  In particular, both $\gfq$-algebras $\gfq[V \oplus V^*]^{U_n}$ and
  $\gfq[V \oplus V^*]^{B_n}$ are complete intersections. The
  generating invariants given in~\eqref{maina} and~\eqref{mainb} are
  minimal, except in the case $q = 2$ of~\eqref{mainb} (in which $B_n
  = U_n$).
\end{theorem}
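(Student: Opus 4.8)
The plan is to apply \lref{1lA} to $A=\gfq[V\oplus V^*]^{U_n}$ for part~\eqref{maina}, taking $f_1\upto f_n$ in the role of the $f_i$ of the lemma, the $f_i^*$ in the role of the $g_i$ (so $m=n$), the $u_j$ with $2-n\le j\le n-2$ in the role of the $h_i$ (so $l=2n-3$), and the relations listed in~\eqref{2eqRels} in the role of $R_1\upto R_l$. Hypotheses~\eqref{1lAa} and~\eqref{1lAb} cost nothing: by~\eqref{classical invariants} the ring $\gfq[V\oplus V^*]^{U_n\times U_n}=\gfq[f_1\upto f_n,f_1^*\upto f_n^*]$ is a polynomial ring, $A$ is module-finite over it (being wedged between it and $\gfq[V\oplus V^*]$), so the $f_i,f_i^*$ form a homogeneous system of parameters of the $2n$-dimensional domain $A$; and Corollary~\ref{1cField}, together with the fact that $u_0$ is among our $h_i$, gives $\Quot(A)=\gfq(f_1\upto f_n,f_1^*\upto f_n^*,u_{2-n}\upto u_{n-2})$. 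That the relations~\eqref{2eqRels} lie in $\ker(\phi)$ is precisely the content of~\eqref{2eqRk}, \eqref{2eqRkp}, \eqref{2eqRkm}; the point of the particular list is that these are exactly the members of $\{R_k,R_k^+,R_k^-\}$ in which only the $u_j$ with $|j|\le n-2$ occur, which one checks happens for $R_k$ when $2\le k\le n-1$, for $R_k^+$ when $1\le k\le n-2$, and for $R_k^-$ when $3\le k\le n$.

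It remains to verify the two less intuitive hypotheses. For~\eqref{1lAc} I would reduce modulo $(X_1\upto X_n,Y_1\upto Y_n)$: since by~\eqref{2eqDickson} every Dickson invariant $c_{s,t}$ with $t<s$ is a sum of monomials of positive degree in the $f_l$ while $c_{s,s}=1$, each relation in~\eqref{2eqRels} collapses, modulo $(X_1\upto X_n,Y_1\upto Y_n)$, to a single monomial $\pm Z_j^{q^e}$ with $e\ge 1$; explicitly $R_1^+\mapsto Z_{2-n}^{q}$, $R_k\mapsto Z_{2k-1-n}^{q^{\min\{k-1,n-k\}}}$, and $R_k^-\mapsto Z_{2k-2-n}^{q^{\min\{k-1,n-k+1\}}}$, and the indices $j$ occurring run over all of $\{2-n\upto n-2\}$ without repetition. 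Hence $P/(R_1\upto R_l,X_1\upto X_n,Y_1\upto Y_n)$ is Artinian, which is~\eqref{1eqDim0}, and~\eqref{1lAc} follows.

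The crux, and the step I expect to be the main obstacle, is~\eqref{1lAd}: after inverting $x=\overline{X}_1\cdots\overline{X}_n$ (respectively $y=\overline{Y}_1\cdots\overline{Y}_n$) one must use the relations to eliminate all $2n-3$ of the classes $\overline{Z}_j$, leaving only $x^{-1},\overline{X}_1\upto\overline{X}_n$ and $n$ further elements (respectively $y^{-1},\overline{Y}_1\upto\overline{Y}_n$ and $n$ further elements). The mechanism is a triangular elimination: in $B_x$ the top Dickson invariant $c_{s,0}=\prod_{l=1}^{s}\overline{X}_l^{q-1}$ and $c_{s,s}=1$ become units, and in each of $R_k$ ($2\le k\le n-1$), $R_k^-$ ($3\le k\le n$), and $R_1^+$ one of these occurs as the coefficient of the (possibly Frobenius-twisted) term of extreme $u$-index, so that relation solves for one $\overline{Z}_j$ in terms of the $\overline{Z}$'s of larger index (and powers of them), $x^{-1}$, and the $\overline{X}_i,\overline{Y}_i$; running through~\eqref{2eqRels} in a suitable order eliminates all the $\overline{Z}_j$, and applying the $*$-symmetry does the same in $B_y$. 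The delicate point is that the substituted terms carry Frobenius powers $u^{q^e}$, so one has to work in an order in which every $q$-th power encountered involves a $\overline{Z}_j$ that has already been disposed of; checking that~\eqref{2eqRels} admits such an order is the combinatorial heart of the proof, and it is the phenomenon made visible for $n=3,4,5,6$ in \exref{1ex23}. With~\eqref{1lAa}--\eqref{1lAd} in hand, \lref{1lA} yields $\gfq[V\oplus V^*]^{U_n}=\gfq[f_1\upto f_n,f_1^*\upto f_n^*,u_{2-n}\upto u_{n-2}]$, that this algebra is a complete intersection, and that $\ker(\phi)=(R_1\upto R_l)$; the case $n=2$ is treated with the same lemma, the single relation~\eqref{1eqR2} arising by combining $R_1$ with $F^*$ applied to $R_2$.

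Part~\eqref{mainb} is obtained by the same method applied to $B_n$: the $\fti_i,\fti_i^*$ form a homogeneous system of parameters by~\eqref{classical invariants}, $u_0$ lies among the chosen $u_j$ ($1-n\le j\le n-1$), the relations~\eqref{3eqRels} lie in $\ker(\phi)$ by~\eqref{3eqRk} and~\eqref{3eqRkp}, and~\eqref{1lAc}, \eqref{1lAd} are checked as before, now using that $\tilde{R}_k$ and $\tilde{R}_k^+$ reduce modulo $(X_1\upto X_n,Y_1\upto Y_n)$ to powers of $Z_{2k-1-n}$ and $Z_{2k-n}$, these indices exhausting $\{1-n\upto n-1\}$. (Conceptually this is forced: $U_n$ is a normal subgroup of $B_n$ with quotient the diagonal torus, so $\gfq[V\oplus V^*]^{B_n}=\bigl(\gfq[V\oplus V^*]^{U_n}\bigr)^{B_n/U_n}$; passing to torus-invariants in the presentation of part~\eqref{maina}---eliminating the products $f_kf_{n+1-k}^*$ via $R_k$, adjoining $u_{1-n}$ and $u_{n-1}$ through $R_1$ and $R_n$, and raising $R_k,R_k^+$ to the $(q-1)$st power or subtracting $\fti_k$-multiples---produces exactly the presentation of part~\eqref{mainb}.) Finally, for minimality: since all of these algebras are graded with degree-zero part $\gfq$, a homogeneous generating set is minimal iff its elements are linearly independent modulo the square of the irrelevant ideal, and this holds as soon as no relation has a term equal to a nonzero scalar times a single variable $X_i$, $Y_i$ or $Z_j$; comparing degrees rules this out in all cases of~\eqref{maina} and in those cases of~\eqref{mainb} with $q\ge 3$, while for $q=2$ one has $\deg\tilde{R}_1=\deg u_{1-n}$ and $\tilde{R}_1$ does contain the monomial $u_{1-n}$ with coefficient in $\gfq^{\times}$ (symmetrically $\tilde{R}_n$ and $u_{n-1}$), so $u_{1-n}$ and $u_{n-1}$ become redundant --- as they must, since $B_n=U_n$ when $q=2$.
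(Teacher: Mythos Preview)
Your overall strategy is the paper's: apply \lref{1lA} and verify hypotheses~\eqref{1lAa}--\eqref{1lAd}. Your treatments of~\eqref{1lAa}, \eqref{1lAb}, and~\eqref{1lAc} are correct and essentially identical to the paper's (your reduction modulo $(X_i,Y_i)$ for~\eqref{1lAc} is exactly the paper's ``relation for $u_i$'' argument, made explicit). One small slip: the list~\eqref{2eqRels} is \emph{not} ``exactly'' the relations involving only $u_j$ with $|j|\le n-2$; the relations $R_k^+$ for $2\le k\le n-2$ also qualify but are not used.

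Your verification of~\eqref{1lAd}, however, has a genuine gap. You assert that in $B_x$ one can eliminate \emph{all} $2n-3$ of the $\overline{Z}_j$, leaving the $\overline{Y}_i$ as the ``$n$ further elements.'' This fails. In $B_x$ only the $c_{s,0}$ (products of powers of the $f_i$) and $c_{s,s}=1$ become units; the $c_{s,0}^*$ do not. The minimum-index $u$ in $R_k$ is $u_{k-n}$ (coefficient $c_{k-1,0}$, linear), and in $R_k^-$ it is $u_{k-n-1}$ (coefficient $c_{k-1,0}$, linear)---but these two lists of indices \emph{coincide}, both running over $\{2-n,\ldots,-1\}$. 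Moreover, in $R_1^+$ the extreme term is $u_{2-n}^q$, not $u_{2-n}$, so it cannot be solved for. The $u_j$ with $j\ge 0$ never appear linearly with a coefficient that is a unit in $B_x$: their linear occurrences carry coefficients of the form $c_{s,0}^*$. So your triangular scheme can dispose of at most $n-2$ of the $u_j$, not all $2n-3$.

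The paper's elimination in $B_x$ is different: it \emph{keeps} $u_0,\ldots,u_{n-2}$ and $f_1^*$ as the $n$ further elements, and instead eliminates $u_{-1},\ldots,u_{2-n}$ together with $f_2^*,\ldots,f_n^*$. The latter are removed via the terms $-f_k f_{n+1-k}^*$ in $R_k$ and $-f_k^q f_{n+1-k}^*$ in $R_k^+$, whose coefficients $f_k,f_k^q$ are units in $B_x$. Concretely, processing the list in reverse order $R_n^-,R_{n-1},R_{n-1}^-,R_{n-2},\ldots,R_3^-,R_2,R_1^+$ successively eliminates $u_{-1},f_2^*,u_{-2},f_3^*,\ldots,u_{2-n},f_{n-1}^*,f_n^*$. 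The argument for $B_y$ is parallel but not obtained by a literal $*$-symmetry (the set~\eqref{2eqRels} is not $*$-stable, as the paper notes for $n=4$); one processes the list in forward order and alternately eliminates $u_1,f_2,f_3,u_2,f_4,\ldots$. The same correction applies to part~\eqref{mainb}: in $B_x$ one eliminates the $\tilde f_i^*$ (via $\tilde R_k$) and the negative-index $u_j$ (via $\tilde R_k^+$), keeping $u_0,\ldots,u_{n-1}$, not all the $\tilde f_i^*$. With this fix, the rest of your outline---including the minimality argument---goes through.
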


Before proving \tref{main}, we shall provide examples in the case
where $n \in \{1,2,3,4\}$.

\begin{ex} \label{1ex23}
  \begin{enumerate}
  \item[($U_1$)] If $n=1$, then $U_n = U_1 = \{1\}$ and $\gfq[V \oplus
    V^*]^{U_1} = \gfq[V \oplus V^*] = \gfq[x_1,y_1]$. This case is not
    covered by the uniform description of \tref{main}\eqref{maina}.
  \item[($U_2$)] If $n=2$, we have
    \[
    f_1 = x_1, \quad f_1^* = y_2, \quad f_2 = x_2^q - x_2 x_1^{q-1},
    \quad f_2^* = y_1^q - y_1 y_2^{q-1}, \quad \text{and} \quad u_0 =
    x_1y_1+x_2y_2.
    \]
    The relation~\eqref{1eqR2} can be verified by direct computation:
    \begin{multline*}
      \left(x_1 y_1 + x_2 y_2\right)^q - (x_1 y_2)^{q-1} \left(x_1 y_1
        + x_2 y_2\right) - x_1^q \left(y_1^q - y_1 y_2^{q-1}\right) -
      y_2^q \left(x_2^q - x_2 x_1^{q-1}\right) = \\
      x_1^q y_1^q + x_2^q y_2^q - x_1^q y_1 y_2^{q-1} - x_1^{q-1} x_2
      y_2^q - x_1^q y_1^q + x_1^q y_1 y_2^{q-1} - x_2^q y_2^q +
      x_1^{q-1} x_2 y_2^q = 0.
    \end{multline*}
    We have already seen that $f_1$, $f_1^*$, $f_2$, $f_2^*$, and
    $u_0$ satisfy the hypotheses~\eqref{1lAa} and~\eqref{1lAb} from
    \lref{1lA} (see after~\eqref{classical invariants}).  The
    relation~\eqref{1eqR2} also satisfies~\eqref{1lAc} from
    \lref{1lA}. Indeed, if we treat $u_0$ and the $f_i$ and $f^*_i$ as
    indeterminates for a moment, it is clear that the relation
    together with $f_1,f_2,f^*_1$, and $f^*_2$ forms a homogeneous
    system of parameters. Moreover, if we localize by $f_1$, the
    relation can be used to eliminate $f^*_2$ as a generator; and
    localizing by $f^*_1$ eliminates the generator $f_2$.
    So~\eqref{1lAd} is also satisfied, and applying \lref{1lA} proves
    \tref{main}\eqref{maina} for $n = 2$.
    
    Why are the relations for $n = 2$ not given by~\eqref{2eqRels}?
    Notice that the relations $R_1$, $R_2^-$ read
    \[
    - f_1^{* q-1} u_0 + u_{-1} - f_1f_2^* = 0 \quad \text{and} \quad -
    f_1^{q-1} u_{-1} + u_0^q - f_2 f_1^{*q} = 0,
    \]
    so they involve $u_{-1}$, which is not included in the list of
    generators. But \eqref{1eqR2} can be obtained by adding the
    $f_1^{q-1}$-fold of $R_1$ to $R_2^-$.
  \item[($U_3$)] For $n = 3$, the relations are
    \begin{align*}
      \tag{$R_1^+$} \label{2eqR31} u_{-1}^q - (f_1^{*q
        (q-1)}+f_2^{*q-1}) u_0^q + (f_1^* f_2^*)^{q-1} u_1 - f_1^q
      f_3^* & = 0, \\ 
      \tag{$R_2$} \label{2eqR32} u_0^q - f_1^{*q-1} u_1 - f_1^{q-1}
      u_{-1} + (f_1 f_1^*)^{q-1} u_0 - f_2 f_2^* & = 0, \\
      \tag{$R_3^-$} \label{2eqR33} u_1^q - (f_1^{q (q-1)}+f_2^{q-1})
      u_0^q + (f_1 f_2)^{q-1} u_{-1} - f_3 f_1^{*q} & = 0.
    \end{align*}
    It is clear that the relations satisfy~\eqref{1lAc} from
    \lref{1lA}. Moreover, if we localize the algebra $B$ defined by
    the relations by $f_1^* f_2^*$, we obtain an algebra that is
    generated by $f_1^*,f_2^*,f_3^*,f_1,u_{-1},u_0$, and $(f_1^*
    f_2^*)^{-1}$. (By abuse of notation, we write $f_i^*$ for the
    element corresponding to $f_i^*$ in $B$ and so on.) In fact, we
    can use~\eqref{2eqR31} to eliminate the generator $u_1$ of
    $B_{f_1^* f_2^*}$, then~\eqref{2eqR32} to eliminate $f_2$ and,
    finally, \eqref{2eqR33} to eliminate $f_3$. We can also localize
    by $f_1 f_2$.  Then we use~\eqref{2eqR33} to eliminate $u_{-1}$,
    then~\eqref{2eqR32} to eliminate $f_2^*$, and, finally,
    \eqref{2eqR31} to eliminate $f^*_3$. So we are left with the
    generators $f_1,f_2,f_3,f_1^*,u_0,u_1$, and $(f_1 f_2)^{-1}$.
    
    There is a total of nine relations of the type~($R_k^{(\pm)}$),
    but as it happens, just the above three serve for the proof of
    \tref{main}\eqref{maina} in the case $n = 3$. Besides, some of the
    nine relations involve invariants other than
    $f_1,f_2,f_3,f_1^*,f^*_2,f^*_3,u_{-1},u_0,u_1$. For example,
    \eqref{2eqR1} reads
    \begin{equation*} \tag{$R_1$} \label{2eqR1}
      u_{-2} - (f_1^{*q (q-1)}+f_2^{*q-1}) u_{-1} + (f_1^* f_2^*)^{q-1}
      u_0 - f_1 f_3^* = 0,
    \end{equation*}
    so it serves to express $u_{-2}$ in terms of the above nine
    invariants.
  \item[($U_4$)] For $n = 4$, the relations from \tref{main} (a) read
    \begin{align*}
      \tag{$R_1^+$} \label{2eqR1p} & \begin{aligned} u_{-2}^q & -
        \left(f_1^{*q^2 (q-1)} + f_2^{*q (q-1)} +
          f_3^{*q-1}\right) u_{-1}^q \\
        & + \left((f^*_1 f^*_2)^{q (q-1)} + (f_1^{*q} f_3^*)^{q-1} +
          (f_2^* f_3^*)^{q-1}\right) u_0^q - (f_1^* f_2^* f^*_3)^{q-1}
        u_1 - f_1^q f^*_4 = 0,
    \end{aligned} \\ \\
    \tag{$R_2$} \label{2eqR2} & \begin{aligned} u_{-1}^q & - f_1^{q-1}
      u_{-2} - \left(f_1^{*q (q-1)} + f_2^{*q-1}\right) u_0^q +
      f_1^{q-1} \left(f_1^{*q (q-1)} +
        f_2^{*q-1}\right) u_{-1} \\
      & + (f_1^* f^*_2)^{q-1} u_1 - (f_1 f^*_1 f^*_2)^{q-1} u_0 - f_2
      f^*_3 = 0,
    \end{aligned} \\ \\
    \tag{$R_3^-$} \label{2eqR3m} & \begin{aligned} u_0^{q^2} & -
      f_1^{*q (q-1)} u_1^q - \left(f_1^{q (q-1)} + f_2^{q-1}\right)
      u_{-1}^q + \left(f_1^{q (q-1)} +
        f_2^{q-1}\right) f_1^{*q (q-1)} u_0^q \\
      & + (f_1 f_2)^{q-1} u_{-2} - (f_1 f_2 f_1^{*q})^{q-1} u_{-1} -
      f_3 f_2^{*q} = 0,
    \end{aligned} \\ \\
    \tag{$R_3$} \label{2eqR3} & \begin{aligned} u_1^q & - f_1^{*q-1}
      u_2 - \left(f_1^{q (q-1)} + f_2^{q-1}\right)
      u_0^q + \left(f_1^{q (q-1)} + f_2^{q-1}\right) f_1^{*q-1} u_1 \\
      & + (f_1 f_2)^{q-1} u_{-1} - (f_1 f_2 f_1^*)^{q-1} u_0 - f_3
      f_2^* = 0,
    \end{aligned} \\ \\
    \tag{$R_4^-$} \label{2eqR4m} & \begin{aligned} u_2^q & -
      \left(f_1^{q^2 (q-1)} + f_2^{q (q-1)} +
        f_3^{q-1}\right)  u_1^q \\
      & + \left((f_1 f_2)^{q (q-1)} + (f_1^q f_3)^{q-1} + (f_2
        f_3)^{q-1}\right) u_0^q - (f_1 f_2 f_3)^{q-1} u_{-1} - f_4
      f_1^{*q} = 0.
    \end{aligned}
  \end{align*}
  With these relations, we can make an argument analogous to the above
  for $U_3$, showing that \lref{1lA} is applicable. We will do this in
  general in the forthcoming proof of \tref{main}\eqref{maina}.
  
  Notice that applying the involution~$*$ transforms~\eqref{2eqR1p}
  into~\eqref{2eqR4m} and~\eqref{2eqR2} into~\eqref{2eqR3};
  but~\eqref{2eqR3m} is not invariant under~$*$.  This ``violation of
  symmetry'' can be fixed by adding the $(f_2^{q-1})$-fold
  of~\eqref{2eqR2} to~\eqref{2eqR3m}. The result is
  \begin{equation*} \tag{$R_3'$} \label{2eqR3s}
    \begin{aligned}
      u_0^{q^2} & - f_1^{*q (q-1)} u_1^q - f_1^{q (q-1)} u_{-1}^q +
      \left((f_1 f^*_1)^{q (q-1)} - (f_2 f^*_2)^{q-1}\right) u_0^q \\
      & + (f_2 f^*_1 f^*_2)^{q-1} u_1 + (f_1 f_2 f^*_2)^{q-1} u_{-1} - (f_1
      f_2 f^*_1 f^*_2)^{q-1} u_0 - f_3 f_2^{*q} - f_2^q f^*_3=0,
    \end{aligned}
  \end{equation*}
  which is $*$-invariant and can substitute the
  relation~\eqref{2eqR3m}. This also demonstrates that there is some
  arbitrariness in our choice of generating relations.
  \item[($B_1$)] If $n=1$, then $\fti_1 = x_1^{q-1}$, $\fti_1^* =
    y_1^{q-1}$, and $u_0 = x_1 y_1$. \tref{main}\eqref{mainb} asserts
    that $\gfq[V \oplus V^*]^{B_1}$ is generated by $\fti_1$,
    $\fti_1^*$, and $u_0$, subject to the relation
    \begin{equation*} \tag{$\tilde{R}_1$}
      u_0^{q-1} - \fti_1\fti_1^* = 0.
    \end{equation*}
    This can easily be verified by hand.
  \item[($B_2$)] If $n=2$, one gets the following three relations
    between the $B_2$-invariants:
    \begin{align*}
      \tag{$\tilde{R}_1$} \left(u_{-1} - \fti_1^* u_0\right)^{q-1} -
      \fti_1 \fti^*_2 & = 0, \\
      \tag{$\tilde{R}_1^+$} u_0^q - \fti_1 u_{-1} - \fti_1^*
      u_1 + \fti_1 \fti_1^* u_0 & = 0, \\
      \tag{$\tilde{R}_2$} \left(u_1 - \fti_1 u_0\right)^{q-1} - \fti_2
      \fti_1^* & = 0.
    \end{align*}
    This looks nicely symmetric, in the sense that the set of
    relations is stable under the involution~$*$. But it is clear that
    the symmetry will be lost when~$n$ becomes bigger. In fact, our
    choice of generating relations of the $B_n$-invariants is
    arbitrary, just as in the case of $U_n$-invariants. \exend
  \end{enumerate}
  \renewcommand{\exend}{}
\end{ex}

\begin{proof}[Proof of \tref{main}]
  The proofs of~\eqref{maina} and~\eqref{mainb} are very similar and
  both rely on the use of Lemma \ref{1lA}.
  \medskip
  
  $\bullet$ Let us first prove~\eqref{maina}. Since
  \exref{1ex23}($U_2$) deals with the case where $n = 2$, we may
  assume that $n > 2$.  We want to apply \lref{1lA} to $A = \gfq[V
  \oplus V^*]^{U_n}$, $m = n$, $l = 2n-3$, $g_i = f_i^*$, $(h_1 \upto
  h_l)=(u_{2-n} \upto u_{-1},u_0,u_1 \upto u_{n-2})$, and $R_1 \upto
  R_l$ being replaced by $R_1^+$, $R_2$, $R_3^-$, $R_3$, $R_4^-$,
  $R_4$, $R_5^-$ \upto $R_{n-2}$, $R_{n-1}^-$, $R_{n-1}$, $R_n^-$.

  From \eqref{classical invariants} we deduce that $f_1 \upto f_n$,
  $f_1^* \upto f_n^*$ satisfy the hypothesis~\eqref{1lAa} from
  \lref{1lA}. From \cref{1cField} and again \eqref{classical
    invariants}, it follows that
  \[
  \gfq(V \oplus V^*)^{U_n} = \gfq(f_1 \upto f_n,f^*_1 \upto
  f^*_n,u_0),
  \]
  so the hypothesis~\eqref{1lAb} of \lref{1lA} is also satisfied.
  
  In order to establish the hypotheses~\eqref{1lAc} and~\eqref{1lAd}
  of \lref{1lA}, we analyze the relations~($R_k^{(\pm)}$).
  
  We will say that one of the relations is a relation {\em for} a
  $u_i$ if the relation equates a power of $u_i$ to a polynomial in
  our claimed generators, and each monomial of this polynomial
  involves at least one of the~$f_i$ or~$f_i^*$. We will say that one
  of the relations {\em $f$-eliminates} a (claimed) generator~$g$ if
  this relation, viewed as a polynomial in~$g$, has degree~$1$ and
  leading coefficient a product of powers of the~$f_i$. In the same
  way, we speak of relations that {\em $f^*$-eliminate} generators.
  Notice that $c_{s,0}$, as defined in \eqref{2eqDickson}, is a
  product of powers of the~$f_i$, and $c_{s,0}^*$ is a product of
  powers of the~$f_i^*$. Using this terminology, our analysis of the
  relations can be summarized in the following table:
  \begin{center}
    \begin{tabular}{|c|c|c|c|c|c|}
      \hline \strut relation & involves & relation for & $f$-eliminates &
      $f^*$-eliminates & range \\ \hline
      \ref{2eqRkp} & \parbox{2.7cm}{\strut $f_1 \upto f_k$, $f^*_1 \upto
        f^*_{n+1-k}$, $u_{k-n+1} \upto u_k$} & $u_{2 k - n}$ &
      $f^*_{n+1-k}$ & $u_k$ & $k = 1$ \\ \hline
      \ref{2eqRk} & \parbox{2.7cm}{\strut $f_1 \upto f_k$, $f^*_1 \upto
        f^*_{n+1-k}$, $u_{k-n} \upto u_{k-1}$} & $u_{2 k-n-1}$ &
      $f^*_{n+1-k},u_{k-n}$ & $f_k,u_{k-1}$ & $2 \le k \le n-1 $ \\
      \hline
      \ref{2eqRkm} & \parbox{2.7cm}{\strut $f_1 \upto f_k$, $f^*_1 \upto
        f^*_{n+1-k}$, $u_{k-n-1} \upto u_{k-2}$} & $u_{2 k-n-2}$ &
      $u_{k-n-1}$ & $f_k$ & $3 \le k \le n$ \\ \hline
    \end{tabular}
  \end{center}
  The last column of the table indicates the range of~$k$ specified
  in~\eqref{2eqRels}. We make several observations.
  
  First, since $n > 2$, the relations in~\eqref{2eqRels} involve the
  invariants $f_1 \upto f_n$, $f^*_1 \upto f^*_n$, and $u_{2-n} \upto
  u_{n-2}$, which are exactly the generators claimed in
  \tref{main}\eqref{maina}.
  
  Second, in~\eqref{2eqRels} we have one relation for every $u_i$
  (with $2-n \le i \le n-2$). If we regard the $f_i$, $f^*_i$, and $u_i$
  as indeterminates for a moment, it follows that the affine variety
  in $\overline{\gfq}^{4 n - 3}$ given by the equations $f_i = 0$,
  $f^*_i = 0$ and the relations in~\eqref{2eqRels} consists of only one
  point, the origin. It follows that the hypothesis~\eqref{1lAc} of
  \lref{1lA} is satisfied.
  
  It remains to show that~\eqref{1lAd} is also satisfied. By another
  abuse of notation, we will now regard the $f_i$, $f^*_i$, and $u_i$ as
  elements of the algebra $B$ defined by the relations
  in~\eqref{2eqRels}.
  We can use the relations
  \[
  \setlength{\arraycolsep}{0.25mm}
  \begin{array}{lllllllllllr}
    R_1^+, & R_2, & R_3^-, & R_3, & R_4^-, & R_4, & R_5^- \upto & 
    R_{n-2}, & R_{n-1}^-, & R_{n-1}, & R_n^- & \quad \text{(in this
      order) to show that} \\
    u_1, & f_2, & f_3, & u_2, & f_4, & u_3, & f_5 \upto & u_{n-3}, &
    f_{n-1}, & u_{n-2}, & f_n & \quad \text{(also in this order)}
  \end{array}
  \]
  lie in $\gfq\left[(f^*_1 \cdots f^*_n)^{-1},f^*_1 \upto f^*_n,f_1,u_{2-n}
    \upto u_0\right]$. So this algebra is equal to $B\left[(f^*_1 \cdots
    f^*_n)^{-1}\right]$. We can also use
  \[
  \setlength{\arraycolsep}{0.25mm}
  \begin{array}{lllllllllllr}
    R_n^-, & R_{n-1}, & R_{n-1}^-, & R_{n-2} \upto & R_5^-, & R_4, &
    R_4^-, & R_3, & R_3^-, & R_2, & R_1^+ & \quad \! \text{(in this
      order) to show that} \\
    u_{-1}, & f^*_2, & u_{-2}, & f^*_3 \upto  & u_{4-n}, & f^*_{n-3}, &
    u_{3-n}, & f^*_{n-2}, & u_{2-n}, & f^*_{n-1}, & f^*_n & \quad
    \text{(also in this order)}
  \end{array}
  \]
  lie in $\gfq\left[(f_1 \cdots f_n)^{-1},f_1 \upto f_n,f^*_1,u_0
    \upto u_{n-2}\right]$. So this algebra is equal to $B\left[(f_1
    \cdots f_n)^{-1}\right]$. We have shown that the
  hypothesis~\eqref{1lAd} in \lref{1lA} is satisfied, so
  \tref{main}\eqref{maina} follows. \\
  
  $\bullet$ Let us now prove~\eqref{mainb}. From \eqref{classical
    invariants} and \cref{1cField}, we get that $\gfq(V \oplus
  V^*)^{B_n}=\gfq(\fti_1,\dots,\fti_n,\fti_1^*,\dots,\fti_n^*,u_0)$,
  so the hypotheses~\eqref{1lAa} and~\eqref{1lAb} of \lref{1lA}
  follow.
  
  Now we analyze the relations~\eqref{3eqRels} in the same manner as
  in the proof of~\eqref{maina}. This results in
  the following table:
  \begin{center}
    \begin{tabular}{|c|c|c|c|c|c|}
      \hline \strut relation & involves & relation for &
      $\tilde{f}$-eliminates \strut[4mm] & $\tilde{f}^*$-eliminates &
      range \\ \hline
      \ref{3eqRk} & \parbox{2.5cm}{$\tilde{f}_1 \upto \tilde{f}_k$,
        \strut[4mm] $\tilde{f}^*_1 \upto \tilde{f}^*_{n+1-k}$, $u_{k-n}
        \upto u_{k-1}$} & $u_{2 k-n-1}$ & $\tilde{f}^*_{n+1-k}$ &
      $\tilde{f}_k$ & $1 \le k \le n$ \\ \hline
      \ref{3eqRkp} & \parbox{2.5cm}{$\tilde{f}_1 \upto \tilde{f}_k$,
        \strut[4mm] $\tilde{f}^*_1 \upto
        \tilde{f}^*_{n-k}$, $u_{k-n} \upto u_k$} & $u_{2 k-n}$ &
      $u_{k-n}$ & $u_k$ & $1 \le k \le n-1 $ \\
      \hline
    \end{tabular}
  \end{center}
  We first observe that the relations~\eqref{3eqRels} involve only the
  claimed generators. Secondly, there is one relation for each $u_i$,
  so the hypothesis~\eqref{1lAc} of \lref{1lA} is satisfied. Finally,
  to see that~\eqref{1lAd} is also satisfied, we use the relations
  \[
  \setlength{\arraycolsep}{0.25mm}
  \begin{array}{lllllllllllr}
    \tilde{R}_1, & \tilde{R}_1^+, & \tilde{R}_2, & \tilde{R}_2^+, &
    \tilde{R}_3, & \tilde{R}_3^+ \upto &  \tilde{R}_{n-2}, &
    \tilde{R}_{n-2}^+, & \tilde{R}_{n-1}, & \tilde{R}_{n-1}^+ &
    \tilde{R}_n & \quad \text{(in this order) to show that} \\
    \tilde{f}_1, & u_1, & \tilde{f}_2, & u_2, & \tilde{f}_3, & u_3
    \upto & \tilde{f}_{n-2}, & u_{n-2}, & \tilde{f}_{n-1}, & u_{n-1},
    & \tilde{f}_n & \quad \text{(also in this order)}
  \end{array}
  \]
  lie in $\gfq\left[(\tilde{f}^*_1 \cdots \tilde{f}^*_n)^{-1},\tilde{f}^*_1
    \upto \tilde{f}^*_n,u_{1-n} \upto u_0\right]$. We can also use
  \[
  \setlength{\arraycolsep}{0.25mm}
  \begin{array}{lllllllllllr}
    \tilde{R}_n, & \tilde{R}_{n-1}^+, & \tilde{R}_{n-1}, &
    \tilde{R}_{n-2}^+, & \tilde{R}_{n-2} \upto & \tilde{R}_3^+, &
    \tilde{R}_3, & \tilde{R}_2^+, & \tilde{R}_2, & \tilde{R}_1^+, &
    \tilde{R}_1 & \quad \text{(in this order) to show that} \\
    \tilde{f}^*_1, & u_{-1}, & \tilde{f}^*_2, & u_{-2}, & \tilde{f}^*_3 \upto &
    u_{3-n}, & \tilde{f}^*_{n-2}, & u_{2-n}, & \tilde{f}^*_{n-1}, &
    u_{1-n}, & \tilde{f}^*_n & \quad \text{(also in this order)}
  \end{array}
  \]
  lie in $\gfq\left[(\tilde{f}_1 \cdots \tilde{f}_n)^{-1},\tilde{f}_1
    \upto \tilde{f}_n,u_0 \upto u_{n-1}\right]$. This shows
  that~\eqref{1lAd} of \lref{1lA} is also satisfied, so applying the
  lemma yields the desired result.
  
  The statement about the minimality of generators will be proved
  below.
\end{proof}

\paragraph{Bigrading.} 
There is an obvious bigrading on $K[V \oplus V^*]$, given by assigning
the bidegree $(1,0)$ to every $x_i$ and $(0,1)$ to every $y_i$. This
bigrading passes to $K[V \oplus V^*]^G$ for every $G \le \GL_n(K)$. It
is interesting in itself, and also provides an easy way to prove the
minimality statement in \tref{main}.  All generating invariants
occurring in \tref{main} are bihomogeneous, and their bidegrees are
listed in the following table:

\begin{center}
  \begin{tabular}{c|c|c|c|c|c|}
    invariant & $f_i$ & $f_i^*$ & $\tilde{f}_i$ &
    $\tilde{f}^*_i$ & $u_i$ \\ \hline
    bidegree & $\left(q^{i-1},0\right)$ & $\left(0,q^{i-1}\right)$ &
    $\left((q-1) q^{i-1},0\right)$ & $\left(0,(q-1) q^{i-1}\right)$ & 
    $\begin{array}{ll}
      \left(q^i,1\right) & \strut[4mm] \text{if} \ i \ge 0, \\
      \left(1,q^{-i}\right) & \strut[4mm] \text{if} \ i \le 0
    \end{array}$
  \end{tabular}
\end{center}
The relations are also bihomogeneous of the following bidegrees:
\label{2RelDegs}

\begin{center}
  \begin{tabular}{c|c|c|c|c|c|}
    relation & $R_k$ & $R_k^+$ & $R_k^-$ &
    $\tilde{R}_k$ & $\tilde{R}_k^+$ \\ \hline
    \strut[4mm] bidegree \strut[4mm] & $\left(q^{k-1},q^{n-k}\right)$
    & $\left(q^k,q^{n-k}\right)$ & $\left(q^{k-1},q^{n+1-k}\right)$ &
    $(q-1) \cdot \left(q^{k-1},q^{n-k}\right)$ &
    $\left(q^k,q^{n-k}\right)$
  \end{tabular}
\end{center}

\begin{proof}[Proof of \tref{main} (continued)]
  To prove the minimality of the generating invariants, we assume, by
  way of contradiction, that one of the given generators is
  unnecessary. Then there exists a relation equating this generator to
  a polynomial in the other generators. We may assume this relation to
  be bihomogeneous of the same bidegree as the unnecessary generator.
  This implies that one of the generating relations must have bidegree
  bounded above (in both components) by the bidegree of the
  unnecessary generator. By comparing the bidegrees of the generating
  invariants and the bidegrees of the relations (and keeping in mind
  for which ranges of~$k$ each relation appears in \tref{main}), we
  see that this only happens in one case: if $q = 2$, then
  $\tilde{R}_1$ and $\tilde{R}_n$ have bidegrees
  $\left(1,q^{n-1}\right)$ and $\left(q^{n-1},1\right)$, respectively.
  Since this case was excluded in the minimality statement, the proof
  is complete.
\end{proof}

Since $\FF_q[V \oplus V^*]^{U_n}$ and $\FF_q[V \oplus V^*]^{B_n}$ are
complete intersections, we can also write down their bigraded Hilbert
series. For a general bigraded vector space $V$ (with
finite-dimensional bihomogeneous components $V_{d,e}$), the bigraded
Hilbert series is defined as
\[
H\left(V,s,t\right) := \sum_{d,e = 0}^\infty
\dim_K\left(V_{d,e}\right) s^d t^e \in \ZZ[[s,t]].
\]
The results are
\[
H\left(\FF_q[V \oplus V^*]^{U_n},s,t\right) =
\frac{\prod_{k=2}^{n-1}\left(1 - s^{q^{k-1}} t^{q^{n-k}}\right)
  \prod_{k=1}^{n-1} \left(1 - s^{q^k}
    t^{q^{n-k}}\right)}{\prod_{i=0}^{n-1}
  \left(\strut\left(1-s^{q^i}\right) \left(1 - t^{q^i}\right)\right)
  \prod_{i=0}^{n-2} \left(1 - s^{q^i} t\right) \prod_{i=1}^{n-2}
  \left(1 - s t^{q^i}\right)}
\]
and
\[
H\left(\FF_q[V \oplus V^*]^{B_n},s,t\right) = \frac{\prod_{k=1}^n
  \left(1 - s^{(q-1) q^{k-1}} t^{(q-1) q^{n-k}}\right)
  \prod_{k=1}^{n-1} \left(1 - s^{q^k}
    t^{q^{n-k}}\right)}{\prod_{i=0}^{n-1} \left(\strut\left(1 -
      s^{(q-1) q^i}\right) \left(1 - t^{(q-1) q^i}\right)\right)
  \prod_{i=0}^{n-1} \left(1 - s^{q^i} t\right) \prod_{i=1}^{n-1}
  \left(1 - s t^{q^i}\right)}.
\]
Notice that the Hilbert series with respect to the usual total degree
can be obtained from the bigraded Hilbert series by setting $s = t$.

\section{A conjecture about $\GL_n(\gfq)$}

We have also considered the invariant ring $\gfq[V \oplus
V^*]^{\GL_n(\gfq)}$ of the general linear group. It is well known that
the invariant ring $\gfq[V]^{\GL_n(\gfq)}$ is generated by the Dickson
invariants $c_{n,0} \upto c_{n,n-1}$ (see \mycite[Theorem~1.2]{wil} or
\mycite[Theorem~8.1.5]{lsm}). The $c_{n,i}^*$ are further invariants
in $\gfq[V \oplus V^*]^{\GL_n(\gfq)}$, and we also have the invariants
$u_i$. Various computations in the computer algebra system MAGMA
(see~[\citenumber{magma}]) have prompted us to make the following
conjecture.

\begin{conjecture} \label{3cGL}
  If $n \ge 2$, the invariant ring of the general linear group is
  generated by $4 n - 1$ invariants as follows:
  \[
  \gfq[V \oplus V^*]^{\GL_n(\gfq)} = \gfq[c_{n,0} \upto
  c_{n,n-1},c^*_{n,0} \upto c^*_{n,n-1},u_{1-n} \upto u_{n-1}].
  \]
  The invariant ring is Gorenstein but {\bfseries\itshape not} a
  complete intersection.
\end{conjecture}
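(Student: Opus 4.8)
The plan is to follow the $U_n$/$B_n$ strategy as far as possible, but to replace the complete-intersection conclusion of \lref{1lA} by a direct normality argument, and to read off the Gorenstein and non-complete-intersection assertions from the structure of the action together with the bigraded Hilbert series. The field statement is immediate: by Dickson's theorem (\mycite[Theorem~8.1.5]{lsm}) we have $\gfq[V]^{\GL_n(\gfq)} = \gfq[c_{n,0} \upto c_{n,n-1}]$, so \cref{1cField} gives
\[
\gfq(V \oplus V^*)^{\GL_n(\gfq)} = \gfq\left(c_{n,0} \upto c_{n,n-1}, c_{n,0}^* \upto c_{n,n-1}^*, u_0\right).
\]
To upgrade this to the ring, I would set $A := \gfq[V \oplus V^*]^{\GL_n(\gfq)}$ and let $A'$ be the subalgebra generated by the $4n-1$ claimed invariants. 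Since the $c_{n,i}$ and $c_{n,i}^*$ form a homogeneous system of parameters for $A$, the ring $A$ is integral over $A'$ and $A \subseteq \Quot(A')$; hence, exactly as in the final lines of the proof of \lref{1lA}, it suffices to show $A'$ is normal. The complete-intersection route to normality is unavailable here (we intend to prove its failure), so instead I would verify Serre's conditions R1 and S2 on a presentation $P/I \cong A'$: writing down enough relations specialised from \lref{2lDet} with the Dickson $c_{s,t}$ replacing the $f$'s, and running the same localisation-by-$x$ and -by-$y$ bookkeeping as in hypothesis~\eqref{1lAd} of \lref{1lA} to see that on the two principal opens the presentation becomes a localised polynomial ring (giving R1), with S2 supplied by the Cohen--Macaulayness discussed below.

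The Gorenstein property I expect to follow cleanly from one structural observation. The group $\GL_n(\gfq)$ acts on $V \oplus V^*$ through $\mathrm{diag}\bigl(\sigma,(\sigma^{-1})^T\bigr)$, whose determinant is $\det(\sigma)\cdot\det(\sigma)^{-1}=1$; thus $\GL_n(\gfq) \le \SL(V \oplus V^*)$. By the positive-characteristic analogue of Watanabe's theorem (the $\det = 1$ case forces the canonical module of the invariant ring to be isomorphic to the ring itself; see \mycite{bens}), $A$ is quasi-Gorenstein, and a Cohen--Macaulay quasi-Gorenstein ring is Gorenstein. So the Gorenstein claim reduces to Cohen--Macaulayness of $A$. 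The same $\det=1$ remark applies to $U_n$, $B_n$ and $\SL_n(\gfq)$, and explains uniformly why all of these invariant rings are Gorenstein.

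For the failure of the complete-intersection property, granting generation and Cohen--Macaulayness, $A$ is a Cohen--Macaulay ring of Krull dimension $2n$ minimally generated by $4n-1$ elements (minimality being checked via the bigrading, as in \tref{main}), so its embedding codimension is $(4n-1)-2n = 2n-1$ --- precisely the codimension of the complete intersection $\gfq[V\oplus V^*]^{B_n}$. Hence $A$ is a complete intersection if and only if its defining ideal is minimally generated by exactly $2n-1$ elements, i.e. the first Betti number satisfies $\beta_1 = 2n-1$. I would therefore compute the bigraded Hilbert series $H(A,s,t)$ and exhibit that its numerator is \emph{not} a product of $2n-1$ binomials $\prod(1 - s^{a_i} t^{b_i})$, equivalently that $\beta_1 > 2n-1$ (a direct MAGMA-assisted computation should already display this for $n=2$), thereby proving $A$ is not a complete intersection. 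This is consistent with the obstruction of \mycite{KW} and \mycite{Gordeev:87}: since $\GL_n(\gfq)$ \emph{is} generated by pseudo-reflections (transvections and diagonal reflections), it passes that necessary condition, and the conjecture merely records that the condition is, as usual, far from sufficient.

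The hard part will be the Cohen--Macaulayness of $A$, on which both the Gorenstein conclusion and the Hilbert-series criterion depend. Because $p \mid |\GL_n(\gfq)|$ for all $n \ge 2$, the Hochster--Eagon theorem does not apply and no general mechanism is available; one would likely have to produce an explicit homogeneous system of parameters (the Dickson invariants $c_{n,i}$, $c_{n,i}^*$ being the natural candidate) together with a free module basis over it, or compute the depth by local cohomology, perhaps exploiting the presentation of $A$ as the $S_n$-invariants of the explicitly known ring $\gfq[V\oplus V^*]^{B_n}$ afforded by the factorisation $\GL_n(\gfq) = \langle B_n, S_n\rangle$. Establishing depth $= 2n$ in this fully modular setting is precisely the step I expect to be genuinely difficult, and is presumably why the statement remains a conjecture.
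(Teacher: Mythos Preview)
The paper does not prove this statement: it is explicitly labelled a \emph{conjecture}, and the surrounding text offers only computational verification for $(n,q)\in\{(2,2),(2,3),(2,4),(3,2)\}$ together with partial degree-bounded checks for a few further pairs. There is no argument to compare against.

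Your proposal is therefore not a comparison target but a sketch of a possible attack. You are candid that it is incomplete and you correctly isolate Cohen--Macaulayness of $A$ as the essential obstacle on which both the Gorenstein claim and the Hilbert-series argument for non-complete-intersection depend. Two specific points deserve flagging. First, your suggested route to Cohen--Macaulayness via ``the $S_n$-invariants of the explicitly known ring $\gfq[V\oplus V^*]^{B_n}$'' does not make sense as stated: the symmetric group $S_n$ (embedded as permutation matrices) does not normalise $B_n$, so it does not act on $\gfq[V\oplus V^*]^{B_n}$, and $(\gfq[V\oplus V^*]^{B_n})^{S_n}$ is not defined. The Bruhat-type generation $\GL_n(\gfq)=\langle B_n,S_n\rangle$ gives only $\gfq[V\oplus V^*]^{\GL_n(\gfq)}=\gfq[V\oplus V^*]^{B_n}\cap\gfq[V\oplus V^*]^{S_n}$, which is a much weaker statement. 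Second, the normality argument for $A'$ is asserted rather than carried out: you would need an explicit list of relations among the $c_{n,i}$, $c_{n,i}^*$, $u_j$ sufficient to run the localisation bookkeeping, and no such list is produced (nor does the paper produce one). The quasi-Gorenstein observation via $\det=1$ on $V\oplus V^*$ is a genuinely useful structural remark, but it does not by itself close any of the gaps.
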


We have been able to verify the conjecture computationally for $(n,q)
\in \{(2,2),(2,3),(2,4),\linebreak (3,2)\}$.  For $(n,q) \in
\left\{(2,5),(2,7),(3,3),(4,2)\right\}$, we managed to gain evidence
for the conjecture by checking that all invariants up to some degree
(as far as the computer calculation was possible) lie in the algebra
that Conjecture~\ref{3cGL} claims to be the invariant ring.

\tref{main} and Conjecture~\ref{3cGL} (if true)
tell us that for $G \in \left\{U_n,B_n,\GL_n(\gfq)\right\}$,
the invariant ring $\gfq[V \oplus V^*]^G$ is generated by generators
of $\gfq[V]^G$, their $*$-images, and invariants of the form
$u_i$. How general is this phenomenon? To find out, we considered the
special linear groups.

\begin{ex} \label{3exSL}
  For $G = \SL_2(\FF_{\! 3})$ and $V = \FF_3^2$ the natural
  $G$-module, we have
  \[
  \FF_{\! 3}[V]^G = \FF_{\! 3}\left[\strut\right.\underbrace{x_1^3 x_2
    - x_1 x_2^3}_{=: f_1},\underbrace{x_1^6 + x_1^4 x_2^2 + x_1^2
    x_2^4 + x_2^6}_{=: f_2}\left.\strut\right].
  \]
  (In fact, the invariants of $\SL_n(\gfq)$ acting on its natural
  module are well known for general~$n$ and~$q$, see
  \mycite[Theorem~8.1.8]{lsm}.) Turning to the action on $\FF_{\! 3}[V
  \oplus V^*]$, we verify that the $G$-orbit of $h := x_1 y_2 - x_2
  y_1$ has length~6 and includes $-h$. Therefore a square root of the
  negative of the orbit product is an invariant, which we write as~$g
  \in \FF_{\! 3}[V \oplus V^*]^G$. The bidegree of~$g$ is $(3,3)$.
  
  On the other hand, the $f_i$ and their $*$-images have bidegrees
  $(4,0),(6,0),(0,4)$, and $(0,6)$, and the $u_i$ and $u_{-i}$ have
  bidegrees $(3^i,1)$ and $(1,3^i)$, respectively, for~$i$
  non-negative. So $g \in
  \FF_{\! 3}[f_1,f_2,f_1^*,f_2^*,u_0,u_1,u_{-1},\ldots]$ would
  imply $g = \pm u_0^3$, which is not the case. We conclude that for
  $G = \SL_2(\FF_{\! 3})$, the invariant ring $\FF_{\! 3}[V \oplus V^*]^G$ is
  {\em not} generated by generators of $\FF_{\! 3}[V]^G$, their
  $*$-images, and invariants of the form $u_i$.
  
  Further calculations show that this carries over to other special
  linear groups $\SL_n(\gfq)$.
\end{ex}

\bigskip

\begin{center}
\begin{tabular}{ll}
  C\'edric Bonnaf\'e & Gregor Kemper \\
  Universit\'e Montpellier 2 & Technische Universit\"at
  M\"unchen \\
  Institut de Math\'ematiques et de Mod\'elisation  & Zentrum Mathematik - M11 \\
  de Montpellier (CNRS-UMR 5149) & \\
  Place Eug\`ene Bataillon & Boltzmannstr. 3 \\
  34095 MONTPELLIER Cedex & 85\,748 Garching \\
  France & Germany \\
  {\tt cedric.bonnafe$@$math.univ-montp2.fr} & {\tt
    kemper$@$ma.tum.de}
\end{tabular}
\end{center}

\end{document}